\pgfplotsset{compat=newest}
 \numberwithin{equation}{section}
\newtheorem{thm}[equation]{Theorem}
\newtheorem{prop}[equation]{Proposition}
\newtheorem{cor}[equation]{Corollary}
\theoremstyle{definition}
\newtheorem{defn}[equation]{Definition}
\theoremstyle{definition}
\newtheorem{rem}[equation]{Remark}
\newtheorem{note}[equation]{Notation}
\theoremstyle{definition}
\newtheorem{exa}[equation]{Example}
\newtheorem{set}[equation]{Setup}
\tikzstyle arrowstyle=[scale=1]
\newcommand{\supp}{\mathrm{supp}}
\begin{document}
\title{Obstructions for the existence of separating morphisms and totally real pencils}
\author{Matilde Manzaroli}
\begin{abstract}
  \setlength{\parindent}{0pt}
  \par It goes back to Ahlfors that a real algebraic curve $C$ admits a separating morphism $f$ to the complex projective line if and only if the real part of the curve disconnects its complex part, i.e. the curve is \textit{separating}. The degree of such $f$ is bounded from below by the number $l$ of real connected components of $ \mathbb{R} C$. The sharpness of this bound is not a priori clear. We prove that real algebraic separating curves, embedded in some ambient surface and with $l$ bounded in a certain way, do not admit separating morphisms of lowest possible degree. Moreover, this result of non-existence can be applied to show that certain real separating plane curves of degree $d$, do not admit totally real pencils of curves of degree $k$ such that $kd \leq l$. 
  \par
  \medskip 
  R\'ESUM\'E. Il remonte \`a Ahlfors qu'une courbe alg\'ebrique r\'eelle $C$ admet un morphisme s\'eparant $f$ \`a la droite complexe projective si et seulement si la partie r\'eelle de la courbe d\'econnecte sa partie complexe, i.e. la courbe est \textit{s\'eparante}. Le degr\'e d'un tel $f$ est born\'e par le bas par le nombre de composantes connexes r\'eelles de $ \mathbb{R} C$. La nettet\'e de cette borne n'est pas claire a priori. Nous prouvons que les courbes alg\'ebriques r\'eelles s\'eparantes, plong\'ees dans une surface ambiante
 et avec $l$ born\'e d'une certaine mani\`ere, n'admettent pas de morphismes s\'eparants de degr\'e le plus bas que possible. De plus, ce r\'esultat de non-existence peut \^etre appliqu\'e pour montrer que certaines courbes r\'eelles s\'eparantes planes de degr\'e $d$, n'admettent pas de pinceaux de courbes de degr\'e $k$ totalement r\'eels tels que $kd \leq l$. 
 \end{abstract}
\maketitle
\tableofcontents
\section{Introduction}
A \textit{real algebraic variety} is a compact complex algebraic variety $X$ equipped with an anti-holomorphic involution $\sigma:X \rightarrow X$, called \textit{real structure}. The \textit{real part} $\mathbb{R}X$ of $X$ is the set of points fixed by the involution $\sigma$. 

Let $C$ be any non-singular real algebraic curve. Denote by $C_1,\dots,C_l$ the connected components of $\mathbb R C$. Harnack-Klein's inequality \cite{Harn76, Klei73} bounds $l$ by the genus $g$ of $C$ plus one. If $C \setminus \mathbb R C$ is connected, we say that $C$ is of \textit{type II} or \textit{non-separating}, otherwise of \textit{type I} or \textit{separating}. Looking at the real part of the curve and its position with respect to its complexification gives information about $l$ and vice versa. For example, we know that if $l$ equals $g+1$, i.e. $C$ is \textit{maximal}, then $C$ is of type I. Or, if $C$ is of type I then $l$ has the parity of $g+1$. Rokhlin promoted and contributed to this new point of view; an example of Rokhlin's contribution is the introduction and the study of the \textit{complex orientations} of a separating real algebraic curve \cite{Rokh74}. Namely, if $C$ is of type I, the two  halves of $C\setminus \mathbb R C$ induce two opposite orientations on $\mathbb R C$ called \textit{complex orientations} of the curve. Looking at complex orientations of separating real curves embedded in some ambient surface has allowed a change of prospective and a remarkable progress in the study of their topology and a refinement of their classifications. 
\begin{defn}
\label{defn: separating}
We say that a real morphism $f$ from a real algebraic curve $C$ to the complex projective line $\mathbb C \mathbb P^1$ is \textit{separating} if $f^{-1}(\mathbb R \mathbb P^1 )= \mathbb R C$. 
\end{defn}
According to Ahlfors \cite[\S4.2]{Ahlf50}, there exists a separating morphism $f: C \rightarrow \mathbb C \mathbb P^1$ if and only if $C$ is separating. In this paper, we focus on the relation between the topology of a real separating curve and the existence of separating morphisms of given degree.
\subsection{Organisation of the paper}
Before stating the main results of this paper, Theorems \ref{thm: main} and \ref{thm: main_D1}, we present them restricted to the case of real algebraic plane projective curves in Section \ref{subsec: real_proj_plane} (Proposition \ref{prop: plane_main}). In Section \ref{subsec: sep}, we present known results and generalities of separating morphisms. In Section \ref{subsec: obstruction_pencil}, we focus on real plane curves admitting totally real pencils (Corollary \ref{cor: odd_even_plane} and Proposition \ref{prop: no_pencil_plane_generale}). Afterwards, Theorems \ref{thm: main} and \ref{thm: main_D1} are stated in Section \ref{subsec: main_statement} and proved in Section \ref{subsec: main}. Finally, in Section \ref{subsec: exa}, we present some examples, applications of the main results and we prove Proposition \ref{prop: no_pencil_plane_generale}.

 \subsection*{Acknowledgements}
I would like to thank Mario Kummer, Stepan Orevkov and Kris Shaw for their interest in this paper; the referee for useful suggestions and comments; Erwan Brugall\'e and Hannah Markwig for remarks and advice on a preliminary version of the paper. Thanks to Athene Grant.

\subsection{Real plane curves}
\label{subsec: real_proj_plane}
Let us consider a real algebraic separating plane projective curve $C$. The following proposition shows that there is a relation between $\deg C$, the number of connected components of $\mathbb R C$ and the existence of separating morphisms $f: C \rightarrow \mathbb C \mathbb P^1$ of a certain degree.

\begin{prop}[Particular case of Theorems \ref{thm: main} and \ref{thm: main_D1}]
\label{prop: plane_main}
Let $C$ be a non-singular real algebraic plane projective curve of type I and with $l$ real connected components. Assume one of the following:
\begin{enumerate}
	\item The degree of $C$ is $2s+1$, for some $s \in \mathbb Z_{\geq 2}$ and 
	
	$$1- \varepsilon + \frac{s^2-5s+4}{2} < \lfloor \frac{l}{2}\rfloor < \frac{s^2+s-2}{2},$$ 
	
	where $\varepsilon \in \{0,1\}$ such that $l\equiv \varepsilon \mod 2$.
	\item The degree of $C$ is $2s$, for some $s \in \mathbb Z_{\geq3}$ and $$ \max(0,1- \varepsilon + \frac{s^2-7s+10}{2}) <\lfloor \frac{l-1}{2}\rfloor  < \frac{s^2-s-2}{2},$$ 
	where $\varepsilon \in \{0,1\}$ such that $l-1\equiv \varepsilon \mod 2$.
\end{enumerate}
Then $C$ admits no separating morphisms of degree $l$.
\end{prop}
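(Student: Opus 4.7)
The statement is advertised as a particular case of Theorems \ref{thm: main} and \ref{thm: main_D1}, so the plan is not to give a separate argument but to specialize those general results to the ambient surface $S = \mathbb{C}\mathbb{P}^{2}$ and to verify that their abstract numerical hypotheses translate into the explicit inequalities of items (1) and (2).

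For a smooth real plane curve $C \subset \mathbb{C}\mathbb{P}^{2}$ of degree $d$ the data to be plugged in is standard: $\mathrm{Pic}(\mathbb{C}\mathbb{P}^{2}) = \mathbb{Z} H$, $C \sim dH$, $K_{\mathbb{C}\mathbb{P}^{2}} = -3H$, $C \cdot C = d^{2}$, $g = (d-1)(d-2)/2$, and by adjunction $K_{C} = (d-3)H|_{C}$. By the Ahlfors bound recalled in Section \ref{subsec: sep}, the minimal possible degree of a separating morphism $f : C \to \mathbb{C}\mathbb{P}^{1}$ is exactly $l$, so ``separating morphisms of lowest possible degree'' in the general theorems means, in the plane case, separating morphisms of degree $l$.

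I expect the numerical hypotheses in Theorems \ref{thm: main} and \ref{thm: main_D1} to be phrased, for an arbitrary ambient surface $S$, in terms of $h^{0}$ of suitable twists of $\mathcal{O}_{S}$; these twists should arise from the residual linear systems obtained by adjunction from a would-be $g^{1}_{l}$ on $C$. For $S = \mathbb{C}\mathbb{P}^{2}$ the relevant identities read, in the odd case $d = 2s+1$, $h^{0}(\mathcal{O}(s-1)) - 1 = (s^{2}+s-2)/2$ and $h^{0}(\mathcal{O}(s-4)) - 1 = (s^{2}-5s+4)/2$; in the even case $d = 2s$, $h^{0}(\mathcal{O}(s-2)) - 1 = (s^{2}-s-2)/2$ and $h^{0}(\mathcal{O}(s-5)) - 1 = (s^{2}-7s+10)/2$. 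Thus the inequalities of items (1) and (2) should transcribe the general hypotheses literally, with the $\max(0, \cdot)$ in item (2) absorbing the degeneration $h^{0}(\mathcal{O}(s-5)) = 0$ when $s \in \{3, 4\}$.

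The only non-arithmetic point that requires care is the parity correction $\varepsilon$: in the lower bound it records the residue of $l$ (odd case) or of $l-1$ (even case) modulo $2$, and its role is to align the floor functions $\lfloor l/2 \rfloor$ and $\lfloor (l-1)/2 \rfloor$ with the Rokhlin-type parity restriction relating the degree of a separating morphism to the genus and to $l$ on a type I curve (also recalled in Section \ref{subsec: sep}). Once this compatibility is checked, Theorem \ref{thm: main} handles the odd case and Theorem \ref{thm: main_D1} the even one, and the proposition follows. The substantive content to be proved in the paper is of course these two general theorems themselves; the plane specialization carried out here is essentially bookkeeping, and the only obstacle I anticipate is making sure that the strict inequalities and the parity selector $\varepsilon$ exactly reproduce the integrality range imposed by the general statements.
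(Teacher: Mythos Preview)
Your high-level plan is right and matches the paper: case (1) is Theorem \ref{thm: main} with $D_0\in|(s-1)H|$, $D_1=\emptyset$, and case (2) is Theorem \ref{thm: main_D1} with $D_0\in|(s-2)H|$, $D_1\in|H|$; the intersection numbers $\tfrac{D_0^2\mp D_0K_X}{2}$ give exactly the bounds $\tfrac{s^2+s-2}{2},\,\tfrac{s^2-5s+4}{2}$ (resp.\ $\tfrac{s^2-s-2}{2},\,\tfrac{s^2-7s+10}{2}$). Two points, however, are not as you describe them.

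First, the hypotheses of the general theorems are stated in terms of the intersection numbers $D_0^2,\,D_0K_X,\,D_1^2,\,D_1K_X$ on the surface (see Setup \ref{set: DODI}), not in terms of $h^0$ of twists; your $h^0$ values happen to coincide on $\mathbb{CP}^2$, but the ``residual $g^1_l$'' heuristic is not the mechanism of the proof. More importantly, the $\max(0,\cdot)$ in item (2) does \emph{not} come from a vanishing $h^0(\mathcal O(s-5))$. In Theorem \ref{thm: main_D1} the lower bound is $\max\bigl(\lfloor\tfrac{D_1^2-m}{2}\rfloor,\,1-\varepsilon+\tfrac{D_0^2+D_0K_X}{2}\bigr)$, and with $D_1$ a line one has $D_1^2=1$, so the first term is $\lfloor\tfrac{1-m}{2}\rfloor$; this is where the $0$ in the $\max$ originates, once $m=1$.

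Second, you have missed the only genuinely non-arithmetic step in the paper's proof of case (2): one must determine the integer $m=\min\bigl(\tfrac{D_1^2-D_1K_X}{2}-1,\,l-1\bigr)=\min(1,l-1)$ appearing in Theorem \ref{thm: main_D1} and check that $m=1$, i.e.\ that $l\geq 2$. The paper does this by invoking Rokhlin's complex orientations formula, which for a type I curve of degree $2s$ forces $l\geq s\geq 3$. Without this, neither the shift $\lfloor\tfrac{l-1}{2}\rfloor$ nor the lower bound in (2) is justified. Your remark that ``the only non-arithmetic point that requires care is the parity correction $\varepsilon$'' is therefore off target: $\varepsilon$ is purely a bookkeeping device recording $l\bmod 2$ (resp.\ $(l-m)\bmod 2$), whereas the substantive input is Rokhlin's bound ensuring $m=1$.
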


Proposition \ref{prop: plane_main} falls within the line of results relating topology, complex orientations and properties of separating plane curves, such as Rokhlin's complex orientations formula (\cite{Rokh74}, \cite{Mish75}), and such as \cite[Theorem 1.1]{Orev21}, where Orevkov shows that there are finer relations for the numbers which intervene in the complex orientations formula.

As corollaries of Proposition \ref{prop: plane_main}, one can find obstructions for the existence of totally real pencils of curves of a certain degree for given separating plane curves; see Section \ref{subsec: obstruction_pencil}.

\subsection{Generalities of separating morphisms}
\label{subsec: sep}
A separating morphism $f:C \rightarrow \mathbb C \mathbb P^1$ is always unramified once restricted to $\mathbb R C$; see \cite[Theorem 2.19]{KumSha20}. Therefore, the restriction of $f$ to each connected component of $\mathbb R C$ is a covering map of $\mathbb R \mathbb P^1 $. This implies that the degree of a separating morphism is at least as big as the number of connected components of $\mathbb R C$. Actually, the definition of separating morphism is more general and includes real morphisms between any real algebraic varieties of same dimension. In order to a have a general idea of the subject, we refer the interested reader to \cite{KumSha20} and \cite{KumLeTMan22}. In the context of this paper, we only need Definition \ref{defn: separating}.

In \cite{Huis01}, \cite{Gaba06},\cite{CopHui13}, \cite{Copp13}, \cite{KumShaw20}, \cite{Orev217} properties of separating morphisms and their existence are treated. For example, \cite[Theorem 7.1]{Gaba06} states that a genus $g$ real separating curve with $l$ real connected components admits a separating morphism of degree at most $\frac{g+ l+1}{2}$. Later, Coppens, in \cite{Copp13}, constructs, for every value $h$ between $l$, the minimum for the degree of a separating morphism, and $\frac{ g+ l +1}{2}$, a separating curve $C$ of genus $ g$ and with $ l$ real connected components such that $h$ is the smallest possible degree of a separating morphism of $C$. In \cite{KumShaw20}, the authors fix a separating curve $C$ and study all separating morphisms of $C$ as follows. Let $\mathbb R C$ consist of $l$ connected components $C_1, \dots, C_l$. Set $d_i(f) \in \mathbb N$ the degree of the covering map $f|_{C_i}: C_i \rightarrow \mathbb R \mathbb P^1 $ and set $d(f):=(d_1(f), \dots, d_l(f))$. Let us denote by $t:=\sum_{i=1}^ld_i(f)$ the degree of $f$. The set $\text{Sep}(C)$ of all such degree partitions forms a semigroup, called \textit{separating semigroup}, and for all elements $d \in \text{Sep}(C)$, satisfying certain conditions, it is shown that $d+ \mathbb Z^l_{\geq 0}$ is also contained in $\text{Sep}(C)$. So, in certain cases, in order to understand $\text{Sep}(C)$ for a given separating curve $C$, it is important to understand which minimal possible element $\text{Sep}(C)$ contains, where with minimal we mean that $t$ is of minimal possible value. 
 \begin{rem}
Thanks to Harnack-Klein inequality, any real curve of genus $g$ cannot have more than $g+1$ real connected components. By Riemann-Roch theorem, all non-singular real curves with $l=g+1$ real connected components admit a separating morphism of degree $l$. But, whenever $l < g+1$, it is not a priori clear whether a separating morphism of degree $l$ exists. 
 \end{rem}
 
\subsection{Obstruction for the existence of totally real pencils}
In this section, we see how obstructions for the existence of separating morphisms may lead to obstructions for the existence of totally real pencils.
\label{subsec: obstruction_pencil}
\begin{defn}
 	\label{defn: totally_real_pencil}
 	Let $C$ be a non-singular real algebraic plane projective curve of type I. We say that $C$ admits a totally real pencil of curves of degree $k$ if there exists an integer $k$ such that there are $f,g \in \mathbb R [x,y,z]_{k}$ such that $V(f,g) \cap C= \emptyset$ and $V(\lambda f + \mu g) \cap C$ consists of real points only for all $\lambda, \mu \in \mathbb R$ not both zero.
 \end{defn}
 
Kummer and Shaw, in \cite{KumShaw20}, also prove that for all real separating curves embedded in the complex projective plane, there exist infinitely many totally real pencils of curves.
 \begin{thm} (\cite[Theorem 1.6]{KumShaw20})
 \label{thm: KS}
 	Let $C$ be a non-singular real algebraic plane projective curve of type I. Then, there exists a positive integer $k$ such that the curve $C$ admits a totally real pencil of curves of degree $k'$, for all $k' \geq k$.
 \end{thm}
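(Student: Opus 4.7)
The plan has two parts: reduce the problem to producing, for every $k \gg 0$, a separating morphism $\phi_k : C \to \mathbb{C}\mathbb{P}^1$ whose associated divisor class equals $[\mathcal{O}_C(kH)]$ (where $H$ is the hyperplane class of $\mathbb{C}\mathbb{P}^2$), and then lift such a morphism to a pencil of plane curves of degree $k$.

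For the lift, I would invoke Serre vanishing applied to the ideal sheaf sequence
\[
0 \to \mathcal{I}_C(k) \to \mathcal{O}_{\mathbb{C}\mathbb{P}^2}(k) \to \mathcal{O}_C(kH) \to 0,
\]
which gives $H^1(\mathbb{C}\mathbb{P}^2, \mathcal{I}_C(k)) = 0$ for $k$ large. Hence the restriction map
\[
r_k : H^0(\mathbb{C}\mathbb{P}^2, \mathcal{O}(k)) \longrightarrow H^0(C, \mathcal{O}_C(kH))
\]
is surjective for $k \gg 0$ and respects the real structures. If $V_k \subset H^0(C, \mathcal{O}_C(kH))$ is a real $2$-dimensional subspace that is base-point-free on $C$ and whose real sections have totally real zero divisors, then any real pre-image of $V_k$ under $r_k$ is a real pencil of plane curves $\lambda f + \mu g$ of degree $k$ satisfying $V(f,g) \cap C = \emptyset$ and $V(\lambda f + \mu g) \cap C \subset \mathbb{R}C$ for every $[\lambda:\mu] \in \mathbb{R}\mathbb{P}^1$ --- precisely a totally real pencil in the sense of Definition \ref{defn: totally_real_pencil}. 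A separating morphism $\phi_k$ with divisor class $[\mathcal{O}_C(kH)]$ defines exactly such a $V_k$.

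To produce $\phi_k$ I would start from a separating morphism $\phi_0 : C \to \mathbb{C}\mathbb{P}^1$ of some degree $n_0$, which exists because $C$ is of type I by Ahlfors's theorem (\cite[\S4.2]{Ahlf50}). Its divisor class $[D_0]$ lies in a specific connected component of the real Picard variety $\mathrm{Pic}^{n_0}(C)^{\sigma}$, the ``separating'' component characterized as the image, under the real Abel--Jacobi map, of effective real divisors with totally real support. I would first verify that $[\mathcal{O}_C(kH)]$ lies in the analogous separating component of $\mathrm{Pic}^{kd}(C)^{\sigma}$ (where $d = \deg C$) for every $k \gg 0$: concretely, this amounts to producing a real section of $\mathcal{O}_C(kH)$ with totally real zero divisor, which one can construct by an inductive procedure adjoining, at each step, an effective real divisor of degree $d$ supported on $\mathbb{R}C$ to $D_0$.

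The technical heart of the argument --- and the main obstacle --- is the final step: promoting a class in the separating component of $\mathrm{Pic}^{kd}(C)^{\sigma}$ to the divisor class of an actual separating morphism, i.e.\ a base-point-free pencil whose every real member is totally real. This is the real analogue of the complex fact that every line bundle of sufficiently high degree is base-point-free, and I would attack it using the separating semigroup $\mathrm{Sep}(C)$ of \cite{KumShaw20}: one exhibits a partition $(d_1, \dots, d_l) \in \mathrm{Sep}(C)$ with $\sum d_i = kd$ and prescribed divisor class $[\mathcal{O}_C(kH)]$, relying on the structural result that sufficiently large elements of $\mathrm{Sep}(C)$ fill out their connected component in the real Picard variety (together with Riemann--Roch to guarantee base-point-freeness and two-dimensionality of the pencil). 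Once such a partition is secured, the subspace $V_k$ is produced, and the required totally real pencil of plane curves of degree $k' = k$ is recovered via $r_k$; applying the same argument for each $k' \geq k$ gives the conclusion.
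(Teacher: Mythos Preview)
The paper does not contain a proof of this statement: Theorem~\ref{thm: KS} is quoted verbatim from \cite[Theorem~1.6]{KumShaw20} and is used only as background to motivate the question of the minimal value of $k$. There is therefore no ``paper's own proof'' to compare your proposal against.

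For what it is worth, your outline is broadly in the spirit of the argument in \cite{KumShaw20}: one produces separating morphisms in the class $\mathcal{O}_C(kH)$ for all large $k$ via the structure of the separating semigroup, and then lifts them to plane pencils using surjectivity of the restriction map $H^0(\mathbb{C}\mathbb{P}^2,\mathcal{O}(k))\to H^0(C,\mathcal{O}_C(kH))$. The part you flag as the ``technical heart'' --- showing that the class $[\mathcal{O}_C(kH)]$ is actually represented by a separating morphism for all $k\gg 0$ --- is indeed where the work lies, and your sketch of it is vague (in particular, ``the structural result that sufficiently large elements of $\mathrm{Sep}(C)$ fill out their connected component in the real Picard variety'' is not a theorem you have stated precisely or justified). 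But since the present paper neither proves nor claims to prove this result, a detailed critique of your argument is beside the point here.
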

 In \cite[Question 3.6]{KumShaw20}, the authors wonder which may be the minimal possible value of $k$ in Theorem \ref{thm: KS}. An immediate corollary of Proposition \ref{prop: plane_main} gives a lower bound for $k$, as follows.

 \begin{cor}
 \label{cor: odd_even_plane}
 Let $C$ be a non-singular degree $d$ real algebraic plane projective curve of type I and with $l$ real connected components. Assume that $C$ satisfies the hypotheses of Proposition \ref{prop: plane_main}. Then $C$ admits no totally real pencil of curves of degree $k$ such that $kd \leq l$.
  \end{cor}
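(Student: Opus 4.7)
The plan is to reduce the non-existence of totally real pencils to the non-existence of separating morphisms of degree $l$, and then invoke Proposition \ref{prop: plane_main}. The key correspondence is that a totally real pencil of degree $k$ curves on $C$ induces a separating morphism $\varphi : C \to \mathbb C \mathbb P^1$ of degree $kd$.

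To set up this correspondence, given a pencil spanned by $f, g \in \mathbb R [x,y,z]_{k}$ as in Definition \ref{defn: totally_real_pencil}, the assignment $p \mapsto [f(p) : -g(p)]$ defines a morphism $\varphi : C \to \mathbb C \mathbb P^1$ which is everywhere defined thanks to $V(f,g) \cap C = \emptyset$ and which is real since $f, g$ are real. The fibre of $\varphi$ over $[\lambda : \mu]$ is $V(\mu f + \lambda g) \cap C$, which by Bezout's theorem has cardinality $kd$ counted with multiplicity, so $\deg \varphi = kd$. The inclusion $\mathbb R C \subseteq \varphi^{-1}(\mathbb R \mathbb P^1)$ holds automatically because $\varphi$ is real, and the reverse inclusion follows because every fibre of $\varphi$ over a real point of $\mathbb R \mathbb P^1$ consists of real points only, by the totally real hypothesis. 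Hence $\varphi$ is separating in the sense of Definition \ref{defn: separating}.

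I would then combine this with the general lower bound recalled in Section \ref{subsec: sep}: every separating morphism has degree at least $l$, since its restriction to each real connected component of $C$ is a non-constant covering of $\mathbb R \mathbb P^1$. Together with the assumption $kd \leq l$, this forces $\deg \varphi = kd = l$, so $\varphi$ would be a separating morphism of $C$ of degree exactly $l$, contradicting Proposition \ref{prop: plane_main} under the stated hypotheses. There is essentially no obstacle once the pencil is recognised as a separating morphism; the only point needing explicit justification is that $\deg \varphi$ equals $kd$ rather than a proper divisor of it, which is immediate from Bezout applied to the irreducible curve $C$.
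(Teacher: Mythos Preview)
Your argument is correct and is precisely the reasoning the paper has in mind: the corollary is stated there as an ``immediate corollary of Proposition~\ref{prop: plane_main}'' with no separate proof, and your write-up supplies exactly the omitted verification that a totally real pencil of degree $k$ yields a separating morphism of degree $kd$, which together with the lower bound $\deg\varphi\ge l$ forces $kd=l$ and contradicts Proposition~\ref{prop: plane_main}.
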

Corollary \ref{cor: odd_even_plane} gives obstructions for the existence of certain  totally real pencils of curves, but we do not know yet how to compute the minimal value for $k$ of Theorem \ref{thm: KS}. 
 \begin{rem}
 As remarked in \cite[Remark 3.5]{KumShaw20}, according to \cite[\S4.2]{Ahlf50}, real separating plane curves admit separating morphisms (whose degree is a priori unknown). Hence, there exists an integer $k$ such that there are $f,g \in \mathbb R [x,y,z]_{k}$ such that $V(\lambda f + \mu g) \cap C$ consists of real points only for all $\lambda, \mu \in \mathbb R$ not both zero. The difference with Definition \ref{defn: totally_real_pencil} is that $V(f,g) \cap C$ may be non-empty. Analogously, \cite[Theorem 1.7]{Gaba06} implies the existence of separating morphisms of degree between $l$ and $\frac{g+l+1}{2}$ and, once again, this does not imply that a given real separating plane curve of genus $g$ and with $l$ real connected components, admits totally real pencils of curves of a certain degree depending on $l$ and $g$, in the sense of Definition \ref{defn: totally_real_pencil}.
	\end{rem}
 In order to have examples of separating real curves for which Corollary \ref{cor: odd_even_plane} holds, we prove the following.
\begin{prop}
 	\label{prop: no_pencil_plane_generale}
 	For all $d \in \mathbb N_{\geq 5}$ and for all $l \geq \lceil \frac{d}{2} \rceil$ such that $l$ has the parity of $\lceil \frac{d}{2} \rceil$ and is bounded as in Proposition \ref{prop: plane_main}, there exists a non-singular real plane projective curve  $B_d$ of degree $d$, of type I and with $l$ real connected components. Then, by Corollary \ref{cor: odd_even_plane}, the curve $B_d$ admits no totally real pencils of curves of degree $k$ such that $k\leq \frac{l}{d}$.
 \end{prop}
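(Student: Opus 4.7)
The plan is to realize $B_d$ as a small real smoothing of a reducible nodal curve and then invoke Corollary \ref{cor: odd_even_plane}. Fix a non-singular real plane curve $H_{d-1}$ of degree $d-1$ which is of type I and has the maximal number $\frac{(d-2)(d-3)}{2}+1$ of real components (e.g.\ a Harnack curve), and choose a real line $L$ meeting $H_{d-1}$ transversally in $d-1$ real points, distributed one per selected component so as to control the combinatorics. The union $X := H_{d-1} \cup L$ is then a singular real plane curve of degree $d$ with exactly $d-1$ real nodes.

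By Brusotti's theorem, every choice of local real smoothings at these $d-1$ nodes is realized by a non-singular real plane curve of degree $d$ arbitrarily close to $X$. At each node one of the two real smoothings joins the two adjacent real arcs into a common component, while the other separates them; thus the number $l$ of real components of the smoothing changes by $\pm 1$ with each node choice, and as the pattern varies it attains every integer of the correct parity in an accessible window. A standard complex orientations argument --- using that $H_{d-1}$ is of type I and that $L$, viewed as a real rational curve, is compatible with the partition of $H_{d-1}\setminus \mathbb{R}H_{d-1}$ into its two halves --- shows that one may further restrict to the smoothings that preserve the type I property. The numerical bounds on $l$ imposed by Proposition \ref{prop: plane_main} are chosen precisely to sit inside this window of type I smoothings, so that every $l\geq \lceil d/2\rceil$ of the prescribed parity in the stated range is realized by some $B_d$.

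Once such a $B_d$ is produced, its degree, the parity and the range of its number of real components coincide with the hypotheses of Proposition \ref{prop: plane_main}, and Corollary \ref{cor: odd_even_plane} then immediately yields the non-existence of totally real pencils of curves of degree $k$ with $kd \leq l$. The main delicate point is the preservation of the type I property under the family of smoothings that sweeps out the full parity range of $l$; this ultimately comes down to verifying, by an oriented patchworking bookkeeping at the $d-1$ nodes of $X$, that each prescribed smoothing pattern is compatible with a global complex orientation on $B_d$, and it is this verification that pins down the exact range in which $l$ can be prescribed.
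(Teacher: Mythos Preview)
Your approach is in the same spirit as the paper's---perturb the union of a type~I curve of degree $d-1$ with a line and invoke a Fiedler-type orientation argument for type~I preservation---but the sketch has two genuine gaps.

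First, the configuration you propose cannot exist as stated. A real line meets each oval of $\mathbb{R}H_{d-1}$ in an even number of points (and a pseudoline in an odd number), so the $d-1$ intersection points can never be ``distributed one per selected component''. In the paper's construction the opposite choice is made: all $d-1$ intersections of $L$ with $\mathbb{R}B_{d-1}$ lie on a \emph{single} real component (Fig.~\ref{fig: simple}), and this is what makes the combinatorics of the smoothing tractable.

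Second, and more seriously, you cannot get the full range of~$l$ from a \emph{fixed} maximal $H_{d-1}$. Fiedler's theorem (Theorem~\ref{thm: Fiedler_satz}) does not say that arbitrary Brusotti smoothings of $H_{d-1}\cup L$ are of type~I; it says that the \emph{specific} smoothing dictated at every node by a chosen pair of complex orientations is of type~I. Up to reversing orientations this yields essentially two type~I perturbations, hence only two values of~$l$, not a window. Your sentence ``one may further restrict to the smoothings that preserve the type~I property'' followed by the claim that the remaining $l$'s still fill the required interval is exactly the step that fails. The paper resolves this by making the construction \emph{inductive} (Proposition~\ref{prop: separanti_piane_harnack}): at each degree one already has type~I curves $B_{d-1}$ with \emph{every} admissible $l_{d-1}$, and the two Fiedler-compatible perturbations of $B_{d-1}\cup L$ (with auxiliary lines $A_1,\dots,A_d$) produce $l_d=l_{d-1}+d-2$ and $l_d=l_{d-1}+\delta$; varying $l_{d-1}$ over its inductive range then covers all required $l_d$. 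If you want to salvage your single-step argument, you would need to allow $H_{d-1}$ itself to be an arbitrary type~I curve with variable $l_{d-1}$, at which point you are essentially reproducing the paper's induction.
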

Remark that, in order to prove Proposition \ref{prop: no_pencil_plane_generale}, it is enough to construct real plane separating curves satisfying the hypotheses of Proposition \ref{prop: plane_main}. This is done in Proposition \ref{prop: separanti_piane_harnack}; see Section \ref{subsec: exa}.\\

 The Brill-Noether theorem implies that there exist real algebraic separating curves of genus $g$, which admit no separating morphisms of degree less than $\lfloor \frac{g+3}{2} \rfloor$. 
 On the other hand, Proposition \ref{prop: no_pencil_plane_generale} implies the existence of real separating plane curves $C$ of degree $2s+1$, which admit no separating morphisms of degree less or equal to $l=s^2+n$, where $l$ is the number of connected components of $\mathbb R C$ and $n$ is a non-negative integer such that
 \begin{equation}
    \begin{cases}
      n \equiv 1 \pmod{2}\\
     \frac{2s^2-s+3- \varepsilon}{2} < s^2 + n  < s^2+s-2+ \varepsilon,
    \end{cases}
  \end{equation}
with $\varepsilon \in \{0,1\}$ such that $\varepsilon \equiv s+1 \mod 2$. It follows that, since $s^2+n > \lfloor \frac{g(C)+3}{2} \rfloor =\displaystyle{\frac{2s^2-s+3- \varepsilon}{2}}$, the existence of a real separating curve admitting no separating morphisms of degree less or equal to $s^2+n$ cannot be proved directly via Brill-Noether theory.

\subsection{Statement of the main results}
\label{subsec: main_statement}
 
In this paper, we consider real separating curves embedded in some ambient real surface and focus on their separating morphisms. The key tools of our approach are the fact that all separating curves come equipped with two possible opposite complex orientations and the use of \cite[Theorem 3.2]{Orev21} as shown in \cite[Example 3.3]{Orev21}. 
\begin{thm}(\cite[Theorem 3.2]{Orev21})
\label{thm: stepan}
Let $X$ be a smooth real algebraic surface and $C \subset X$ a non-singular real algebraic separating curve. Let $D$ be a real divisor belonging to the linear system $|C+K_X|$. Assume that $D$ has not $C$ as a component. We may always write $D=2D_0 + D_1$ with $D_1$ a reduced curve and $D_0$ an effective divisor. Let us fix a complex orientation on $\mathbb R C$ and an orientation $\mathfrak O$ on $\mathbb R X \setminus (\mathbb R C \cup \mathbb R D_1)$ which changes each time we cross $\mathbb R C \cup \mathbb R D_1$ at its smooth points. The latter orientation induces a boundary orientation on $\mathbb R C \setminus (\mathbb R C \cap D_1)$. Let $f: C \rightarrow \mathbb C \mathbb P^1$ be a separating morphism. Then it is impossible that, for some $p \in \mathbb R \mathbb P^1$, the set $f^{-1}(p) \setminus \supp(D)$ is non-empty and the two orientations coincide at each point of the set.
\end{thm}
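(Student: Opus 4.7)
The strategy is to encode both orientations as signs of real differential forms on $C$, so that the hypothesis becomes a pointwise positivity statement for a real meromorphic function on $C$; a contradiction then follows from the residue theorem on the compact Riemann surface $C$.

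Since $D \in |C + K_X|$ and $D$ does not contain $C$, any real global section of $\mathcal O_X(C + K_X)$ with divisor $D$ may be viewed as a real meromorphic $2$-form $\sigma$ on $X$ with $(\sigma) = 2D_0 + D_1 - C$, having at worst a simple pole along $C$. Its Poincar\'e residue along $C$ is a real holomorphic $1$-form $\omega_0 := \mathrm{Res}_C \sigma$ on $C$ with divisor $D|_C$. On $\mathbb R X$, the sign of $\sigma$ flips exactly when one crosses the smooth locus of $\mathbb R C$ (order $-1$) or of $\mathbb R D_1$ (order $+1$), but not when one crosses $\mathbb R D_0$ (order $+2$); hence $\mathfrak O$ coincides, up to a global sign, with the orientation of $\mathbb R X \setminus (\mathbb R C \cup \mathbb R D_1)$ determined by $\sigma$. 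A local computation with $\sigma = g \, dx \wedge dy / y$ near $C = \{y = 0\}$, giving $\omega_0 = g|_C\, dx$, identifies the boundary orientation of $\mathbb R C$ induced by $\mathfrak O$ with the one determined by the sign of $\omega_0$. For the complex orientation, since $f$ is separating, one half of $C \setminus \mathbb R C$ is mapped holomorphically into one half of $\mathbb C \mathbb P^1 \setminus \mathbb R \mathbb P^1$ and $f|_{\mathbb R C}$ is an unramified covering of $\mathbb R \mathbb P^1$; holomorphy preserves orientation, so the chosen complex orientation on $\mathbb R C$ is precisely the orientation determined by the real $1$-form $f^*(dz)$ (in an affine chart, after replacing $z$ by $1/z$ near $f^{-1}(\infty)$ if needed). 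The hypothesis of the theorem therefore becomes: there is $p \in \mathbb R \mathbb P^1$ such that $f^{-1}(p) \setminus \supp(D) \neq \emptyset$ and the real meromorphic function $h := \omega_0/f^*(dz)$ on $C$ satisfies $h(x) > 0$ at every $x$ in this set.

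After composing $f$ with a real orientation-preserving M\"obius transformation I may assume $p = 0$ and consider $\eta := \omega_0/f$, a meromorphic $1$-form on $C$ whose poles lie in $f^{-1}(0) \subset \mathbb R C$ (using that $f$ is unramified along $\mathbb R C$). At a point $x \in f^{-1}(0) \cap \supp(D)$, $\omega_0$ has a zero of order at least $1$ while $f$ has only a simple zero, so $\eta$ is regular at $x$ and $\mathrm{Res}_x \eta = 0$. At a point $x \in f^{-1}(0) \setminus \supp(D)$, a direct local computation gives $\mathrm{Res}_x \eta = h(x)$. By compactness of $C$ the total sum of residues vanishes, so $\sum_{x \in f^{-1}(0) \setminus \supp(D)} h(x) = 0$, which contradicts the hypothesis that every term in this non-empty sum is strictly positive. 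The main obstacle I expect is the sign bookkeeping in the second step: one must carefully align the boundary-orientation conventions, the Poincar\'e residue signs, and the real local models so that the geometric condition ``orientations coincide at $x$'' really translates into $h(x) > 0$ with exactly the sign needed to produce the contradiction from the residue theorem.
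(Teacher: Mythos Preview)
This theorem is not proved in the present paper; it is quoted from Orevkov and used as a black box. Your argument via the Poincar\'e residue and the residue theorem on $C$ is correct and is, in fact, essentially Orevkov's original proof. The key identifications --- that the sign of the real meromorphic $2$-form $\sigma$ with divisor $D-C$ realises the checkerboard orientation $\mathfrak O$, that its Poincar\'e residue $\omega_0$ along $C$ is a holomorphic $1$-form with divisor $D|_C$ encoding the induced boundary orientation, and that $f^*(dz)$ encodes the complex orientation since $f$ is orientation-preserving from a half of $C$ to a half of $\mathbb C\mathbb P^1$ --- are exactly those used there, and the contradiction comes from the same application of the residue theorem to $\omega_0/(f-p)$. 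Your observation that $\eta$ is regular at points of $f^{-1}(0)\cap\supp(D)$ (because $\omega_0$ vanishes there to order at least one while $f$ has only a simple zero, $f$ being unramified over $\mathbb R\mathbb P^1$) is the crucial point that isolates exactly the residues $h(x)$ over $f^{-1}(0)\setminus\supp(D)$.

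Your closing caveat about sign bookkeeping is honest but not a genuine obstacle. Both $\mathfrak O$ and the fixed complex orientation are each defined only up to one global sign, so the geometric hypothesis ``the two orientations coincide at every point of $f^{-1}(p)\setminus\supp(D)$'' translates into ``$h$ has a fixed sign throughout that set'', with the sign depending only on those global choices. Either sign yields the same contradiction with $\sum_x h(x)=0$, so no delicate alignment of conventions is actually required.
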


In \cite{Orev21}, Orevkov shows interesting applications of Theorem \ref{thm: stepan}, such as \cite[Theorem 1.1]{Orev21} and the construction of complex schemes (i.e. real schemes endowed with an orientation) in the real projective plane which are realisable by real pseudoholomorphic separating plane curves of odd degree and not realisable by real algebraic separating plane curves of same degree \cite[Proposition 1.5]{Orev21}. Moreover, other applications of \cite[Theorem 3.2]{Orev21} are presented via examples. Now, let us introduce Set-up \ref{set: DODI} and state the main results, Theorems \ref{thm: main} and \ref{thm: main_D1}.

\begin{set}
\label{set: DODI}
 Let $X$ be a smooth real algebraic surface and $C \subset X$ a non-singular real algebraic separating curve. Assume that 
 \begin{enumerate}
 \item[(1)] the surface $X$ has holomorphic Euler characteristic of the trivial bundle $\chi (\mathcal{O}_X) \geq 1$;
	\item[(2)] $(-K_X)^2 \geq 0$;
	\item[(3)]$-K_X.D \geq 0$ for all effective divisors $D$.
 \end{enumerate}
 Let $D$ be a real divisor  belonging to the linear system $|C+K_X|$. Assume that $D$ has not $C$ as a component. Fix a decomposition $2D_0 + D_1$ of the divisor $D$ with the following properties. The divisor $D_0$ is effective and $D_1$ is a reduced curve such that $\frac{D_0^2-D_0.K_X}{2} \in \mathbb Z_{\geq 0}$ is maximised; see Example \ref{exa: even_choiceD1}.
\end{set}

\begin{exa}
\label{exa: even_choiceD1}
	Let $C$ be a non-singular real algebraic curve of degree $2s$ in $\mathbb C \mathbb P^2$. Then $\mathbb C \mathbb P^2$ satisfies (1), (2) and (3) of Set-up \ref{set: DODI} and, if $s \geq 2$, the divisor $D_0$ realises the class of a plane curve of degree $s-2$ and $D_1$ is a line in $\mathbb C \mathbb P^2$.
\end{exa}

\begin{thm}
\label{thm: main}
Assume Setup \ref{set: DODI}. Denote with $l$ the number of connected components of $\mathbb R C$. Then, if
\begin{itemize}
	\item[(4)] $D_1= \emptyset$;
	\item[(5)] $-K_X.D_0 > 0$;
	\item[(6)] 
	$$1-\varepsilon + \frac{D_0^2+ D_0 K_X}{2} < \lfloor \frac{l}{2} \rfloor < \frac{D_0^2-D_0.K_X}{2},$$
	where $\varepsilon \in \{0,1\}$ such that $\varepsilon \equiv l \mod 2$;
\end{itemize}
there are no separating morphisms $f: C \rightarrow \mathbb{C} \mathbb{P}^1$ of degree $l$.
\end{thm}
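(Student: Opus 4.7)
The plan is to argue by contradiction using Theorem~\ref{thm: stepan} applied to a divisor of the form $D'=2D_0'$ obtained by moving $D_0$ inside its linear system.

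Assume a separating morphism $f\colon C\to\mathbb{CP}^1$ of degree $l$ exists. Since the restriction of $f$ to each real component $C_i$ is an unramified cover of $\mathbb{RP}^1$ and the total degree $l$ equals the number of components, each $f|_{C_i}$ is a homeomorphism, and every $p\in\mathbb{RP}^1$ has a unique real preimage $x_i(p)\in C_i$. Because $D\sim C+K_X$ and $D=2D_0$ by~(4), we have $C\equiv K_X\pmod 2$ in $\mathrm{Pic}(X)$; but $[\mathbb{R}K_X]$ is Poincar\'e dual to $w_1(\mathbb{R}X)$, so $[\mathbb{R}C]$ is too, which is precisely the cohomological condition ensuring the existence of an orientation $\mathfrak{O}$ on $\mathbb{R}X\setminus\mathbb{R}C$ that flips across $\mathbb{R}C$. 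After fixing a complex orientation on $\mathbb{R}C$, the induced boundary orientation is smooth on each $C_i$ (because $D_1=\emptyset$), so it globally either agrees or disagrees with the complex orientation there, defining $\epsilon_i\in\{\pm 1\}$. Setting $I_\pm=\{i:\epsilon_i=\pm 1\}$ and flipping the complex orientation if necessary, we may assume $|I_-|\le\lfloor l/2\rfloor$.

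To exploit the linear system, I would first show $h^0(K_X-D_0)=0$ using~(2), (3), (5): if $E\ge 0$ were linearly equivalent to $K_X-D_0$, then~(3) would give $-K_X\cdot E\ge 0$, so $K_X\cdot D_0\ge K_X^2\ge 0$ by~(2), contradicting~(5). Riemann--Roch then yields
\[
\dim|D_0|\ge \chi(\mathcal{O}_X)-1+\frac{D_0^2-D_0\cdot K_X}{2}\ge \frac{D_0^2-D_0\cdot K_X}{2},
\]
and the upper bound of~(6) gives $|I_-|\le\lfloor l/2\rfloor<\dim|D_0|$. For a generic $p_0\in\mathbb{RP}^1$, the sublinear system of $|D_0|$ passing through the $|I_-|$ real points $\{x_i(p_0):i\in I_-\}$ has positive dimension, and since $D_0$ does not have $C$ as a component, $C$ avoids the base locus of $|D_0|$. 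A standard genericity argument then produces a real, reduced, effective $D_0'$ in this sublinear system which does not contain $C$ as a component and does not pass through any $x_j(p_0)$ with $j\in I_+$ (each such passage is a codimension-one condition and $I_+$ is finite, while the system has dimension at least one).

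Setting $D'=2D_0'\in|C+K_X|$, the divisor $D'$ is real, does not have $C$ as a component, and decomposes as $D'=2D_0'+0$ with $D_1'=\emptyset$; in particular, $\mathfrak{O}$ and the signs $\epsilon_i$ are unchanged. By construction,
\[
f^{-1}(p_0)\setminus\mathrm{supp}(D')=\{x_i(p_0):i\in I_+\},
\]
which is non-empty since $|I_+|\ge\lceil l/2\rceil\ge 1$, and the two orientations coincide at each of its points because $\epsilon_i=+1$. This contradicts Theorem~\ref{thm: stepan}. The main technical hurdle is the middle step: confirming via base-locus and dimension-count arguments that a real reduced effective representative of $|D_0|$ satisfying all the required passage and avoidance conditions does exist.
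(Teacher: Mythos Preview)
Your overall strategy matches the paper's: assume a degree-$l$ separating morphism exists, use Riemann--Roch together with hypotheses (1)--(3),(5) to bound $\dim|D_0|$ from below, choose a representative of $D_0$ through the ``bad'' half of the fibre $f^{-1}(p_0)$, and invoke Theorem~\ref{thm: stepan}. The Riemann--Roch step and the vanishing of $h^0(K_X-D_0)$ are fine and coincide with the paper.

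The gap is in your avoidance step, and it is exactly the place where the lower bound in~(6) is needed---a bound you never use. You assert that a generic $D_0'$ in the sublinear system through $\{x_i(p_0):i\in I_-\}$ misses every $x_j(p_0)$ with $j\in I_+$, on the grounds that ``each such passage is a codimension-one condition and $I_+$ is finite, while the system has dimension at least one''. But passage through $x_j(p_0)$ is a condition of codimension \emph{at most} one; it is codimension zero precisely when $x_j(p_0)$ is a base point of the sublinear system, and nothing you have said rules this out. Your remark that $C$ avoids the base locus of $|D_0|$ is irrelevant here: imposing passage through the $I_-$ points can create new base points on $C$. If every member of the sublinear system happened to contain all of $f^{-1}(p_0)$, your argument would collapse.

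The paper closes this gap with a B\'ezout-type count that uses the left inequality of~(6). Writing $\frac{D_0^2-D_0\cdot K_X}{2}=\lfloor l/2\rfloor+h$ with $h>0$, one extends the $\lfloor l/2\rfloor$ prescribed points to a configuration $\mathcal{P}$ of $\lfloor l/2\rfloor+h$ points. If \emph{every} curve of class $D_0$ through any such $\mathcal{P}$ contained all of $f^{-1}(p)$, then by varying one of the $h$ auxiliary points one obtains two distinct curves of class $D_0$ meeting in at least $l+h-1$ points; but the lower bound in~(6) rewrites as $D_0^2<l+h-1$, a contradiction. This is the missing ingredient: without it, the existence of a $D_0'$ avoiding the $I_+$ points is unjustified, and the proof does not go through.
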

\begin{rem}
\label{rem: dopo_main_prop_plane}
Theorem \ref{thm: main} implies $(1)$ of Proposition \ref{prop: plane_main}. In fact, the complex projective plane and any non-singular real plane separating curve of odd degree $d \geq 5$, satisfying the bound in $(1)$ of Proposition \ref{prop: plane_main}, satisfy the hypotheses of Theorem \ref{thm: main}. Remark that $D_0$ realises the class of a plane curve of degree $\frac{d-3}{2}$. 
\end{rem}

In the following, we are going to extend Theorem \ref{thm: main} to separating curves in $X$ for which $D_1$ is not empty. In this context, in order to show that there are no separating morphisms $f: C \rightarrow \mathbb{C} \mathbb{P}^1$ of degree $l$, one needs to impose stricter bounds on the number $l$ of connected components of $\mathbb R C$.
\begin{thm}
\label{thm: main_D1}
Assume Setup \ref{set: DODI}. Denote with $l$ the number of connected components of $\mathbb R C$. Assume that $D_1 \neq \emptyset$.
Set $m=\min( \frac{ D_1^2-D_1 K_X}{2}-1, l-1)$. 

Then, if
\begin{itemize}
	\item[(4)] $-K_X.D_1 > 0$;
	\item[(5)] $D_1^2 < l$;
	\item[(6)] $-K_X.D_0 > 0$;
	\item[(7)] 
	$$\max(\lfloor \frac{D_1^2-m}{2} \rfloor, 1-\varepsilon + \frac{D_0^2+ D_0 K_X}{2}) < \lfloor \frac{l-m}{2} \rfloor < \frac{D_0^2- D_0 K_X}{2},$$
		where $\varepsilon \in \{0,1\}$ such that $\varepsilon \equiv l-m \mod 2$,
\end{itemize}
there are no separating morphisms $f: C \rightarrow \mathbb{C} \mathbb{P}^1$ of degree $l$.
\end{thm}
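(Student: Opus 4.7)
The argument is by contradiction: assuming a separating morphism $f\colon C\to\mathbb{CP}^1$ of degree $l$ exists, we construct a divisor $D' = 2D_0'+D_1 \in |C+K_X|$ (with $D_0'$ linearly equivalent to $D_0$) and a point $p^*\in\mathbb{RP}^1$ producing exactly the forbidden configuration of Theorem~\ref{thm: stepan}. Since $f|_{\mathbb R C}$ is unramified (\cite[Theorem~2.19]{KumSha20}) and the degrees $d_i$ of the coverings $f|_{C_i}$ sum to $l$, we must have $d_i=1$ for each $i$, so every $f|_{C_i}$ is a diffeomorphism onto $\mathbb{RP}^1$. Consequently, for each non-critical $p\in\mathbb{RP}^1$, the fiber meets $\mathbb R C$ in exactly $l$ real points $q_1(p),\dots,q_l(p)$, one on each component, varying smoothly with $p$, and each $q_i$ traces the corresponding $C_i$ once as $p$ traverses $\mathbb{RP}^1$.

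Fix a complex orientation on $\mathbb R C$ and an orientation $\mathfrak O$ as in Theorem~\ref{thm: stepan}, following \cite[Example~3.3]{Orev21}. For each non-critical $p$ and each $i$, set $\epsilon_i(p)\in\{\pm 1\}$ to record whether the complex orientation and the boundary orientation induced by $\mathfrak O$ coincide at $q_i(p)$. The function $\epsilon_i$ is locally constant in $p$ and flips exactly when $p$ crosses a value of $f(\mathbb R D_1\cap C_i)$; since $\epsilon_i$ returns to itself after one loop, each $n_i:=|\mathbb R D_1\cap C_i|$ is even, and $\sum_i n_i$ is bounded above by the intersection number $D_1.C=D_1^2+2D_0.D_1-D_1.K_X$.

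The core step is to locate a non-critical $p^*\in\mathbb{RP}^1$ for which the set $I^-(p^*):=\{i:\epsilon_i(p^*)=-1\}$ has cardinality at most $\lfloor(l-m)/2\rfloor$. Hypothesis~(5), the evenness of each $n_i$, and the very definition of $m$ (which bounds how many components of $\mathbb R C$ can see a genuine flipping of $\epsilon_i$, taking into account the linear-system dimension $\tfrac{D_1^2-D_1.K_X}{2}$ of $|D_1|$) constrain the joint sign pattern $(\epsilon_1,\dots,\epsilon_l)$ enough to guarantee such a $p^*$ exists; freedom to reverse the complex orientation globally halves the count, and the parity condition $\varepsilon\equiv l-m\pmod 2$ is precisely what allows the floor $\lfloor(l-m)/2\rfloor$ to be attained. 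Once $p^*$ is fixed, Riemann--Roch on $X$ applies: hypothesis~(1) gives $\chi(\mathcal O_X)\geq 1$, while hypothesis~(6), the nefness of $-K_X$ built into Setup~\ref{set: DODI}(3), and $(-K_X)^2\geq 0$ rule out effectivity of $K_X-D_0$ and hence force $h^2(X,D_0)=0$, yielding $\dim_{\mathbb C}|D_0|\geq\tfrac{D_0^2-D_0.K_X}{2}$. The right-hand inequality in~(7) then lets us choose a real divisor $D_0'\sim D_0$ passing through every real point $q_i(p^*)$ with $i\in I^-(p^*)$; the left-hand inequality in~(7), together with the bound $\lfloor(D_1^2-m)/2\rfloor$, leaves enough residual freedom in the linear system to ensure that $D_0'$ avoids at least one $q_j(p^*)$ with $\epsilon_j(p^*)=+1$ and does not contain $C$ as a component. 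Setting $D':=2D_0'+D_1$, the set $f^{-1}(p^*)\setminus\mathrm{supp}(D')$ is non-empty and consists only of points at which the two orientations coincide, contradicting Theorem~\ref{thm: stepan}.

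The principal difficulty is the combinatorial step producing $p^*$: it demands a delicate joint analysis of the sign functions $\epsilon_i(p)$, simultaneously exploiting the evenness of each $n_i$, the freedom to flip the global complex orientation, the hypothesis $D_1^2<l$, and the role of $m$ as a cap on how many components of $\mathbb R C$ the divisor $\mathbb R D_1$ can effectively disturb. The dimension counts in Step~3 are essentially bookkeeping once the right $p^*$ and the right parity are in hand.
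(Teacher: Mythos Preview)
Your overall architecture diverges from the paper's in one crucial respect, and that divergence creates a real gap. In the paper's proof, one first fixes an arbitrary $p\in\mathbb{RP}^1$ and then \emph{chooses} a representative of $D_1$ in its linear system so that $D_1$ passes through $m$ specified points $p_{j_1},\dots,p_{j_m}$ of $f^{-1}(p)$ (this is exactly where the Riemann--Roch estimate $\dim|D_1|\ge\tfrac{D_1^2-D_1.K_X}{2}$ and hypotheses (1)--(4) are used). Only after this choice is the checkerboard orientation $\mathfrak O$ set up, and then the remaining $l-m$ fibre points are split into $V^1_p,V^2_p$ and handled with a curve in $|D_0|$ exactly as in Theorem~\ref{thm: main}. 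In other words, $m$ measures how many fibre points one can \emph{force} $D_1$ to absorb, not a bound on how many components an arbitrary fixed $D_1$ may meet.

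You instead freeze $D_1$, set up $\mathfrak O$ once, and try to locate a good $p^*$ by tracking the sign functions $\epsilon_i(p)$. But for a fixed $D_1$ there is no reason the number of components $C_i$ with $\mathbb R D_1\cap C_i\neq\emptyset$ should be controlled by $m$: a line in $\mathbb{CP}^2$ has $m=1$, yet may meet many ovals of a degree-$2s$ curve. Your reading of $m$ as ``a cap on how many components of $\mathbb R C$ the divisor $\mathbb R D_1$ can effectively disturb'' is therefore unjustified, and the ``combinatorial step producing $p^*$''---which you flag as the principal difficulty---is not carried out; the hints you give (evenness of the $n_i$, global orientation flip, hypothesis~(5)) do not combine into a proof that $|I^-(p^*)|\le\lfloor(l-m)/2\rfloor$ for some $p^*$. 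The fix is to give yourself the same freedom the paper uses: after fixing $p$, move $D_1$ inside its linear system to swallow $m$ of the fibre points, and only then run the $D_0$-argument.
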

\begin{rem}
\label{rem: maximising_bound}
Remark that the decomposition of the divisor $D$, fixed in Setup \ref{set: DODI}, allows one to maximise the bounds for $l$ in Theorems \ref{thm: main} and \ref{thm: main_D1}.
\end{rem}
As a corollary of Theorems \ref{thm: main} and \ref{thm: main_D1}, we prove Proposition \ref{prop: plane_main}. 
\begin{proof}[Proof of Proposition \ref{prop: plane_main}]
The point $(1)$ of Proposition \ref{prop: plane_main} is proven in Remark \ref{rem: dopo_main_prop_plane}. So, let $C$ be a non-singular real algebraic plane curve as in $(2)$ of Proposition \ref{prop: plane_main}. Then, this curve $C$ satisfies the hypotheses of Theorem \ref{thm: main_D1}. Indeed, the divisor $D_0$ realises the class of a plane curve of degree $s-2$ and $D_1$ is a line. The number $m=\min (1,l-1)$ must be one because Rokhlin's complex orientations formula (\cite{Rokh74}) bounds $l$ to be bigger or equal to $s$. It follows that the hypotheses of Theorem \ref{thm: main_D1} apply to $C \subset \mathbb C \mathbb P^2$.
\end{proof}

\section{Theorems \ref{thm: main} and \ref{thm: main_D1} and applications}
Section \ref{subsec: main} is uniquely devoted to the proof of Theorem \ref{thm: main} and Theorem \ref{thm: main_D1}. Afterwards, in Section \ref{subsec: exa}, we present examples, applications of Theorems \ref{thm: main} and \ref{thm: main_D1}, and we prove Proposition \ref{prop: no_pencil_plane_generale}. 

\subsection{Proof of Theorems \ref{thm: main} and \ref{thm: main_D1}}
\label{subsec: main}
\begin{proof}[Proof of Theorem \ref{thm: main}]	Denote by $C_i$ the connected components of $\mathbb R C$ where $i=1,\dots l$. For the sake of contradiction, assume that there exists a separating morphism $f: C \rightarrow \mathbb P^1$ of degree $l$. Then, for every $p \in \mathbb R\mathbb P^1$, the set $f^{-1}(p)=\{p_1,\dots,p_{l}\}$ is a collection of $l$ real points such that every $C_i$ contains exactly one $p_i$. 
The positive integer $\frac{D_0^2-D_0.K_X}{2}$ is always less or equal than the dimension of the linear system of curves in $X$ of class $D_0$. To prove the latter is enough to use Riemann-Roch theorem
 $$ \dim H^0(X,\mathcal L (D_0)) - \dim H^1(X,\mathcal L (D_0))+ \dim H^2(X,\mathcal L (D_0))= \chi (\mathcal O_{X})+ \frac{D_0^2-D_0.K_X}{2}$$
 and the hypotheses of Theorem \ref{thm: main}. In fact, hypothesis (1) implies that
 $$\dim H^0(X,\mathcal L (D_0)) + \dim H^2(X,\mathcal L (D_0)) \geq 1 + \frac{D_0^2-D_0.K_X}{2};$$ hypotheses (2), (5) imply that  $-K_X.(K_X-D_0)<0$. Then, hypothesis (3) implies that $K_X-D_0$ is non-effective and, therefore, the dimension of $H^0(X,\mathcal L (K_X-D_0))$ is zero. Finally $H^2(X,\mathcal L (D_0)) \simeq \dim H^0(X,\mathcal L (K_X-D_0))$ by Serre duality.

It follows that, fixed a configuration of $\frac{D_0^2-D_0.K_X}{2}$ points, there always exists at least one curve of class $D_0$ passing through the configuration.

Let us fix an orientation on $\mathbb R X  \setminus \mathbb R C$ which changes each time we cross $\mathbb R C$. The latter orientation induces a boundary orientation $\mathfrak O$ on $\mathbb R C$. Fix some $p$ in $\mathbb R \mathbb P^1$. 	

 Define $Y_p$ (resp. $NY_p$) as the set of points in $f^{-1}(p) \setminus \supp(D)$ on which the complex orientation and the orientation $\mathfrak O$ agree (resp. do not agree). Remark that Theorem \ref{thm: stepan} tells us that if the set $f^{-1}(p) \setminus \supp(D)$ is non-empty, one has that $Y_p$ and $NY_p$ are both non-empty. In the following, we are going to use Theorem \ref{thm: stepan} to get a contradiction.

Fix one of the two complex orientations on $\mathbb R C$. Then, remark that either the number $V^1_p$ of $p_i$'s where the complex orientation and the orientation $\mathfrak O$ agree, or the number $V^2_p$ of the $p_i$'s where the two orientations do not agree, does not exceed $\lfloor \frac{l}{2} \rfloor$. Given a collection $\mathcal P$ of $\frac{D_0^2-D_0.K_X}{2}$ real points in $X$, there exists a curve $A$ of class $D_0$ passing through $\mathcal P$. By hypothesis $ \lfloor \frac{l}{2} \rfloor < \frac{D_0^2-D_0.K_X}{2}$, therefore it is possible to choose such a collection $\mathcal P$ so that it contains at least $ \lfloor \frac{l}{2} \rfloor$ points of $f^{-1}(p)$ belonging to $V^j_p$, for an opportune $j \in \{1,2\}$. It follows that, if the set $f^{-1}(p) \setminus \supp(D)$ is non-empty, either $NY_p$ or $Y_p$ is empty, which contradicts Theorem \ref{thm: stepan}. 
It remains to show that $f^{-1}(p) \setminus \supp(D)$ is non-empty. Assume that for all configurations $\mathcal P$ of 
$$\frac{D_0^2-D_0.K_X}{2}= \lfloor \frac{l}{2} \rfloor + h$$ 
points, containing at least $ \lfloor \frac{l}{2} \rfloor$ points of $f^{-1}(p)$ belonging to $V^j_p$, for an opportune $j \in \{1,2\}$, every curve $A$ of class $D_0$ passing through $\mathcal P$ contains $f^{-1}(p)$. Since $h > 0$, it follows that there exist at least two distinct curves $A, \tilde A$ of class $D_0$ intersecting in at least $l+h-1$ points. On the other hand 
$$D_0^2  < l+h-1= \frac{D_0^2-D_0.K_X}{2} + \lfloor \frac{l}{2} \rfloor + \varepsilon -1 $$ because of hypothesis $(6)$ and, therefore, we obtain a contradiction.

\end{proof}
Let us prove Theorem \ref{thm: main_D1} via a strategy analogous to that explained in the proof of Theorem \ref{thm: main}, minding the role played by $D_1$; see Setup \ref{set: DODI}.
\begin{proof}[Proof of Theorem \ref{thm: main_D1}]
Denote by $C_i$ the connected components of $\mathbb R C$ where $i=1,\dots l$. For the sake of contradiction, assume that there exists a separating morphism $f: C \rightarrow \mathbb P^1$ of degree $l$. 
Let us fix an orientation on $\mathbb R X  \setminus (\mathbb R C \cup \mathbb R D_1)$ which changes each time we cross $\mathbb R C \cup \mathbb R D_1$ at its smooth points; see \cite[Proof of Theorem 3.2]{Orev21} for details. The latter orientation induces a boundary orientation $\mathfrak O$ on $\mathbb R C$. Moreover, fix some $p$ in $\mathbb R \mathbb P^1$ and denote with $p_i \in C_i$ the points belonging to $f^{-1}(p)$. Analogously to the proof of Theorem \ref{thm: main}, thanks to hypotheses (1)-(4) and Riemann-Roch theorem, one can pick $D_1$ as the curve passing through a given collection $\tilde{\mathcal P}$ of $\frac{D_1^2-D_1 K_X}{2}$ points  contained in $m= \min (\frac{D_1^2-D_1 K_X}{2}-1,l-1)$ connected components $C_{j_1}, \dots,C_{j_m}$. Moreover, choose the collection $\tilde{\mathcal P}$ such that it contains the points $p_{j_1},\dots,p_{j_m}$ of $f^{-1}(p)$. Remark that $D_1$ cannot contain all points of $f^{-1}(p)$ because of hypothesis $(5)$. It may happen that $D_1$ contains more than $m$ points among those of $f^{-1}(p)$, nevertheless, in general, we do not have control on that.

As in the proof of Theorem \ref{thm: main}, define $Y_p$ and $NY_p$. Fix one of the two complex orientations on $\mathbb R C$. Then, remark that either the number $V^1_p$ of $p_i$'s contained in $\mathbb R C \setminus \bigsqcup_{i=1}^{m} C_{j_i}$, where the complex orientation and the orientation $\mathfrak O$ agree, or the number $V^2_p$ of the $p_i$'s in $\mathbb R C \setminus \bigsqcup_{i=1}^{m} C_{j_i}$, where the two orientations do not agree, does not exceed $\lfloor \frac{l-m}{2} \rfloor $. Therefore, we pick a collection $\mathcal P$ of $\frac{D_0^2-D_0.K_X}{2}$ real points such that these include the points of $V^j_p$, for an opportune $j \in \{1,2\}$. Once again, thanks to the same argument used in the proof of Theorem \ref{thm: main}, the set $f^{-1}(p) \setminus \supp(D)$ is non-empty,  and either $NY_p$ or $Y_p$ is empty, which contradicts Theorem \ref{thm: stepan}.
\end{proof}
\subsection{Examples and Applications}
\label{subsec: exa} First, let us introduce some notation and terminology for real curves in the real projective plane. The real locus of a non-singular real plane curve is homeomorphic to a disjoint union of circles embedded in $\mathbb{R}\mathbb P^2$. Each circle can be embedded in $\mathbb{R}\mathbb P^2$ in two different ways: if it realises the trivial-class in $H_1(\mathbb{R} \mathbb P^2; \mathbb{Z}/2\mathbb{Z})$, it is called \textit{oval}, otherwise it is called \textit{pseudoline}. If a non-singular real plane curve has even degree then its real locus consists of ovals only; otherwise of exactly one pseudoline and ovals.

An oval in $\mathbb{R}\mathbb{P}^{2}$ separates two disjoint non-homeomorphic connected components: the connected component homeomorphic to a disk is called \textit{interior} of the oval; the other one is called \textit{exterior} of the oval. For each pair of ovals, if one is in the interior of the other we speak about an \textit{injective pair}, otherwise a \textit{non-injective pair}. 

\begin{defn}
\label{defn: real_scheme}

 Let $A \subset \mathbb C \mathbb P^2$ be a non-singular real algebraic curve. We say that $A$ has real scheme $\mathcal{S}$ if the pair $(\mathbb{R} \mathbb P^2, \mathbb{R}A)$ realises the topological type $\mathcal{S}$, up to homeomorphism of $\mathbb R \mathbb P^2$.

\end{defn}

Let us consider real separating curves in the complex projective plane and focus on \cite[Question 3.6]{KumShaw20} (see the beginning of Section \ref{subsec: obstruction_pencil}). A first known example concerns real separating plane curves of degree $2s$ or $2s+1$  having a \textit{nest} of maximal depth $s$, i.e. there are $s$ ovals and any two ovals of the collection form an injective pair. These curves admit a totally real pencil of lines. In fact, for any fixed point $q$ in the interior of the innermost oval of the nest, there exists a totally real pencil of lines with base point $q$. Some more examples can be found in \cite{Touz13}, where for some pairs $(A, \mathcal S)$, where $A$ is a real separating plane sextic with $9$ ovals and $\mathcal S$ its real scheme in $\mathbb R \mathbb P^2$, the minimal value for $k$ of Theorem \ref{thm: KS}, is shown to be equal to $3$. 
\begin{figure}[h!]
\begin{picture}(100,109)

\put(-25,-10){\includegraphics[width=0.25\textwidth]{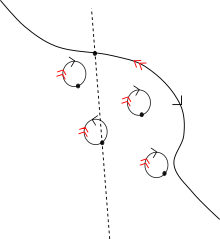}}
\end{picture}
\caption{$(\mathbb R \mathbb P^2, \mathbb R A, \mathbb R L)$ of Example \ref{exa: quintic}. Double arrows denote $\mathfrak O$, simple arrows the fixed complex orientation of $\mathbb R A$ and $\bullet$ the points in $f^{-1}(p)$.}
\label{fig: quintic}
\end{figure} 
\begin{exa}

\label{exa: quintic} Let $A$ be a non-singular plane quintic of type I realising the real scheme with $4$ ovals; see Fig. \ref{fig: quintic}. 
We are going to show that there exist no separating morphisms $f: A \rightarrow \mathbb C \mathbb P^1$ of degree $5$. This result does not follow directly from Proposition \ref{prop: plane_main}, but one can still use Theorem \ref{thm: stepan} in order to prove the statement.

For the sake of contradiction assume that such an $f$ exists. Fix $p \in \mathbb R \mathbb P^1$. Let us use the notation in Setup \ref{set: DODI}. The divisor $D_0$ realises the class of a line. Fix an orientation $\mathfrak O$ that changes each time we cross $\mathbb R A$. Moreover, choose one of the two complex orientations of $\mathbb R A$. Let $a$ (resp. $b$) be the number of connected components of $\mathbb R A$
where the two orientations coincide (resp. are opposite). Since $a+b=5$, we have $a\le 2$ or $b\le 2$. Up to exchange $a$ and $b$
(by reversing the complex orientation), we assume that $b\le 2$ and we trace a line $L$ through the points in $f^{-1}(p)$ belonging to these $b$ connected components; see as a toy example Fig. \ref{fig: quintic}. Since $L$ cannot contain all points in $f^{-1}(p)$, we get a contradiction thanks to Theorem \ref{thm: stepan}. 
\end{exa}

In the following, we prove Proposition \ref{prop: separanti_piane_harnack}, which implies Proposition \ref{prop: no_pencil_plane_generale}. The content of Proposition \ref{prop: separanti_piane_harnack} is a well known fact in the study of the topology of real algebraic separating plane curves. Nevertheless, to our knowledge, there is no proof of it in the literature. Therefore, for the interested reader, we present a proof relying on a variation of Harnack's construction method \cite{Harn76}, \cite[pag. II.4]{Mari79} and \cite[Section 2]{FIed81}, which allows us to obtain type I curves when perturbing the union of two type I curves intersecting transversally in real points only.
\begin{thm}{\cite[Section 2]{FIed81}}
\label{thm: Fiedler_satz}
	Let $A_1$ and $A_2$ be two non-singular real plane projective separating curves of degree respectively $d_1$ and $d_2$ such that they intersect transversally in $d_1d_2$ distinct real double points and each of them is equipped with one of its two complex orientations. Let $B$ be a non-singular real plane curve of degree $d=d_1 + d_2$ obtained from $A_1 \cup A_2$ by a small perturbation respecting the chosen complex orientations at every smoothing of a double point. Then $B$ is separating and the orientation obtained on $\mathbb R B$ from those of $\mathbb R A_1 \cup \mathbb R A_2$ is a complex orientation of $B$.
\end{thm}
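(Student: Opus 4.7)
The plan is to construct the two halves of $\mathbb{C} B \setminus \mathbb{R} B$ by gluing together the halves of $\mathbb{C} A_1$ and $\mathbb{C} A_2$ across the complex annuli that replace the nodes of $A_1 \cup A_2$ under the perturbation. Since each $A_i$ is of type I, I would write $\mathbb{C} A_i \setminus \mathbb{R} A_i = A_i^+ \sqcup A_i^-$ with $A_i^\pm$ connected bordered Riemann surfaces, labelled so that the chosen complex orientation $\omega_i$ on $\mathbb{R} A_i$ is the boundary orientation induced by $A_i^+$. Because the perturbation is local, $\mathbb{C} B$ is canonically identified with $\mathbb{C} A_1 \cup \mathbb{C} A_2$ away from small balls $U_j$ around each real node $p_j$, and inside each $U_j$ the transverse pair of complex disks is replaced by a smooth complex annulus.

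The key step is a local analysis at one node. Choose complex coordinates $(x,y)$ on $U_j$ in which $A_1 = \{y=0\}$, $A_2 = \{x=0\}$ and the real structure is $(x,y) \mapsto (\bar x, \bar y)$. The two possible smoothings are $xy = \epsilon_j$ with $\epsilon_j \in \mathbb{R} \setminus \{0\}$, and the sign of $\epsilon_j$ determines $\mathbb{R} B \cap U_j$. Parametrizing the annulus by $x \in \mathbb{C}^{*}$ and tracking the sign of $\operatorname{Im}(y) = \operatorname{Im}(\epsilon_j/x)$, I would see that for one sign of $\epsilon_j$ the local half $\{\operatorname{Im}(x)>0\}$ of the annulus connects the local piece of $A_1^+$ to the local piece of $A_2^+$, while for the other sign it connects $A_1^+$ to $A_2^-$. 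Comparing the boundary orientations induced on the two real hyperbola branches by the annular half with $\omega_1$ and $\omega_2$, I would then verify that the hypothesis \emph{the smoothing respects the complex orientations at} $p_j$ translates exactly into the statement \emph{the annulus at} $p_j$ \emph{joins} $A_1^+$ \emph{to} $A_2^+$ \emph{and} $A_1^-$ \emph{to} $A_2^-$.

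Granted the local compatibility at every node, I would define
\[
B^+ := (A_1^+ \cup A_2^+) \cup \bigcup_j H_j^+,
\]
where $H_j^+$ is the local half of the annulus in $U_j$ that attaches to $A_1^+$ and $A_2^+$, and $B^-$ analogously. By construction, $B^+$ and $B^-$ are disjoint open subsurfaces of $\mathbb{C} B \setminus \mathbb{R} B$ whose union is $\mathbb{C} B \setminus \mathbb{R} B$; a short connectedness check (the $A_i^\pm$ are connected and the annular halves attach them along arcs) shows that each of $B^\pm$ is connected, so $B$ is of type I. Finally, the boundary orientation of $\mathbb{R} B$ induced by $B^+$ restricts outside the $U_j$'s to $\omega_1 \cup \omega_2$, so the orientation on $\mathbb{R} B$ inherited from $\omega_1 \cup \omega_2$ via the perturbation is a complex orientation of $B$.

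The main obstacle I expect is the local step at a node: one needs an honest check, in the explicit local normal form, that the condition \emph{respects the complex orientations at} $p_j$ is equivalent to the condition \emph{the local halves of the annulus link} $A_1^+$ \emph{to} $A_2^+$ \emph{and} $A_1^-$ \emph{to} $A_2^-$, with no subtle orientation reversal sneaking in from the complex structure on the annulus. Once that verification is carried out, the rest of the argument is a routine gluing construction.
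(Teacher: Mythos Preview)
The paper does not give its own proof of this statement: Theorem~\ref{thm: Fiedler_satz} is quoted verbatim from \cite[Section 2]{FIed81} and used as a black box in the construction of Proposition~\ref{prop: separanti_piane_harnack}. So there is nothing in the paper to compare your argument against.

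That said, your outline is the standard proof and is correct in its architecture. The decomposition $\mathbb{C}A_i\setminus\mathbb{R}A_i=A_i^+\sqcup A_i^-$, the localisation of the perturbation to Milnor balls around the nodes, and the gluing $B^+=(A_1^+\cup A_2^+)\cup\bigcup_j H_j^+$ are exactly how Fiedler's result is usually established. The local normal-form computation you describe (the smoothing $xy=\epsilon_j$ with $\epsilon_j$ real, and tracking which half of the annulus meets which imaginary half-plane) is also the right mechanism, and your identification of it as the only genuinely delicate step is accurate. Once that local check is done carefully, the connectedness of $B^\pm$ is immediate since $A_1^+$ and $A_2^+$ are each connected and at least one node (in fact all $d_1d_2$ of them, since the intersection is entirely real) provides an annular bridge between them.

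One small point worth making explicit when you write this up: the phrase ``respecting the chosen complex orientations at every smoothing of a double point'' is a condition on the \emph{real} picture (the two real branches at $p_j$ are joined in the way that makes $\omega_1$ and $\omega_2$ extend coherently across the smoothing), and your task in the local model is to translate this real condition into the complex statement that $H_j^+$ abuts $A_1^+$ and $A_2^+$. This translation is elementary but is the whole content of the theorem, so it should be written out rather than asserted.
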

\begin{prop}
	\label{prop: separanti_piane_harnack}
	For every positive integer $d$ and for every $\lceil \frac{d}{2} \rceil \leq l_d \leq \frac{(d-1)(d-2)}{2}+1$, where $l_d$ is an integer that has the parity of $\lceil \frac{d}{2} \rceil$, there exists a non-singular real separating plane projective curve $A$ of degree $d$ such that $\mathbb R A$ has $l_d$ connected components.
\end{prop}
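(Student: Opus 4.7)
The plan is to induct on the degree $d$, building the desired separating curves from smaller ones via Theorem \ref{thm: Fiedler_satz}. The base cases $d\leq 4$ are handled by explicit examples: a real line gives $l_1=1$; a real ellipse gives $l_2=1$; a separating cubic (pseudoline together with one oval) gives $l_3=2$; for $d=4$, the Harnack quartic realises $l_4=4$ and a type-I quartic consisting of two nested ovals realises $l_4=2$.

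For the inductive step with $d\geq 5$, I would combine a separating curve $A'$ of lower degree, provided by the induction hypothesis, with an auxiliary separating curve $\Gamma$: a real line of degree $1$ when aiming to pass from degree $d-1$ to degree $d$, or a real ellipse of degree $2$ when passing from $d-2$ to $d$. The choice between these is dictated by the required parity of $l_d$ (recall that $\lceil d/2\rceil$ has the same parity as $\lceil (d-2)/2\rceil$ but not in general as $\lceil (d-1)/2\rceil$), so that the inductive hypothesis furnishes a separating curve $A'$ whose number of components can be matched to $l_d$ after the Fiedler perturbation. One then arranges $A'$ and $\Gamma$ in general position so that they meet transversally in $\deg\Gamma\cdot \deg A'$ distinct real points and fixes a complex orientation on each. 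Theorem \ref{thm: Fiedler_satz} then produces a separating plane curve $A$ of degree $d$ by perturbation of $A'\cup \Gamma$.

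The central combinatorial task is to verify that every admissible value of $l_d$ (in the stated range and parity) is attained. The number of connected components of $A$ differs from that of $A'\cup\Gamma$ by a quantity determined by the smoothing pattern: along each real intersection point the compatible smoothing either merges two real arcs or splits one, and this is forced by the chosen complex orientations on $A'$ and $\Gamma$. By placing $\Gamma$ so as to meet a prescribed subset of the ovals of $A'$ (and, for the case of a conic $\Gamma$, possibly meeting the same oval in two points), one controls the number of merges and splits. Starting from a Harnack-type placement — in which the inductive hypothesis produces $A'$ with its ovals aligned along a common real axis accessible by lines and conics — I would show that each admissible $l_d$ is realised, the maximum being the classical Harnack bound $(d-1)(d-2)/2+1$ and the minimum $\lceil d/2\rceil$.

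The main obstacle is the bookkeeping for the component count, which must be carried out with care in the two sub-cases $d$ even and $d$ odd. Concretely, one must verify that in each inductive step the line or ellipse $\Gamma$ can be chosen so that it meets exactly the prescribed ovals of $A'$ in the right cyclic pattern; this is where the flexibility of keeping $A'$ in a Harnack-like normal form throughout the induction is essential. The proof of Proposition \ref{prop: no_pencil_plane_generale} follows at once from Proposition \ref{prop: separanti_piane_harnack} together with Corollary \ref{cor: odd_even_plane}, by picking an $l_d$ lying in the range allowed by the hypotheses of Proposition \ref{prop: plane_main}.
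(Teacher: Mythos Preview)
Your overall strategy matches the paper's---induction on $d$ via Fiedler's perturbation theorem, maintaining a Harnack-type normal form---but two points need correction.

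First, the parenthetical parity remark is backwards: $\lceil d/2\rceil-\lceil(d-2)/2\rceil=1$ for every $d\geq 2$, so these never share a parity, whereas $\lceil d/2\rceil$ and $\lceil(d-1)/2\rceil$ coincide when $d$ is even and differ by $1$ when $d$ is odd. Hence your stated rationale for introducing a conic is wrong, and indeed the paper never uses one. It always passes from degree $d-1$ to $d$ by adjoining a single line $L$, obtaining the two needed outcomes not from a line/conic dichotomy but from the choice of complex orientation on $\mathbb R L$ relative to that on $\mathbb R B_{d-1}$: one choice yields $l_d=l_{d-1}+(d-2)$, the other $l_d=l_{d-1}+\delta$ with $\delta\in\{0,1\}$, $\delta\equiv d\pmod 2$. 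Both increments have the parity of $d$, which is exactly the jump $\lceil d/2\rceil-\lceil(d-1)/2\rceil$; and together the two operations cover the entire admissible interval for $l_d$.

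Second, your sketch is vague precisely where the paper is explicit: the normal form. The paper's induction hypothesis is not merely ``Harnack-like'' but requires that a \emph{single} component of $\mathbb R B_{d-1}$ meet the fixed line $\mathbb R L$ transversally in $d-1$ consecutive real points (Fig.~\ref{fig: simple}). The perturbation is then written concretely as $b_{d-1}(x,y,z)\,l(x,y,z)+\varepsilon\,a_1(x,y,z)\cdots a_d(x,y,z)$, with the $d$ auxiliary lines $A_i$ all crossing one chosen arc $\mathcal H\subset\mathbb R L\setminus\mathbb R B_{d-1}$; this reproduces the same normal form in degree $d$ and makes the component count immediate from the local pictures (Figs.~\ref{fig: opposite_perturbation}--\ref{fig: perturbed}). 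Without specifying and propagating such a normal form, your claim that ``each admissible $l_d$ is realised'' remains an assertion rather than a proof.
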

\begin{proof}
In the following, whenever we consider a plane curve $A$, we denote its polynomial by $a(x,y,z)$. Let us fix a real line $L$ in $\mathbb C \mathbb P^2$. In order to prove the statement, we show that for any positive integer $d$ and for any integer $l_{d}$ satisfying the hypotheses, there exists a non-singular real plane separating curve $B_{d}$ with $l_{d}$ real connected components such that a unique connected component of $\mathbb R B_{d}$ intersects $\mathbb R L$ transversally in $d$ points as in Fig. \ref{fig: simple}.

\textbf{Base case}: 
For fixed degree $d=1,2$ and $3$ the only possible value for $l_d$ is respectively $1$ and $2$ and any non-singular real curve $B_d$ of one such degree $d$ and such $l_d$ as number of real connected components is maximal, therefore it can be obtained directly via Harnack construction's method (\cite{Harn76}); in particular, the curve $B_d$ can be constructed such that a unique connected component of $\mathbb R B_d$ intersects $\mathbb R L$ in $d$ points.

 \begin{figure}[h!]
\begin{picture}(100,110)
\put(-120,35){\includegraphics[width=0.7\textwidth]{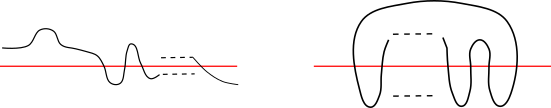}}

\put(-63,30){\rotatebox{-180}{$\overbrace{ \quad \quad \quad \quad \quad \quad }$ }}
\put(-75,15){$d \text{ real intersections}$}
\put(-60,0){Fig. \ref{fig: simple}.1 }

\put(110,30){\rotatebox{-180}{$\overbrace{\quad \quad \quad \quad \quad \quad \quad \quad }$}}
\put(104,15){$d \text{ real intersections}$}
\put(135,0){Fig. \ref{fig: simple}.2 }

\end{picture}
\caption{The union of a connected component of $\mathbb R B_{d}$ and $\textcolor{red}{ \mathbb R L}$. The degree of $B_{d}$ is odd in Fig. \ref{fig: simple}.1 and even in Fig. \ref{fig: simple}.2}
\label{fig: simple}
\end{figure} 

 \begin{figure}[h!]
\begin{picture}(100,230)
\put(-120,15){\includegraphics[width=0.7\textwidth]{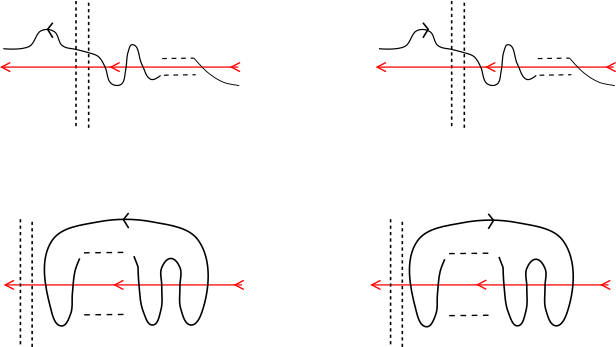}}
\put(-96,150){\textcolor{red}{$\mathcal H$}}
\put(107,150){\textcolor{red}{$\mathcal H$}}
\put(-123,65){\textcolor{red}{$\mathcal H$}}
\put(75,65){\textcolor{red}{$\mathcal H$}}
\put(-64,120){Fig. \ref{fig: opposite_perturbation}.1 }
\put(-84,210){$\overbrace{ \quad }^{d \text{ lines}}$ }
\put(135,120){Fig. \ref{fig: opposite_perturbation}.2 }
\put(119,210){$\overbrace{ \quad }^{d \text{ lines}}$ }
\put(-64,-2){Fig. \ref{fig: opposite_perturbation}.3 }
\put(-116,90){$\overbrace{ \quad }^{d \text{ lines}}$ }
\put(135,-2){Fig. \ref{fig: opposite_perturbation}.4 }
\put(87,90){$\overbrace{ \quad }^{d \text{ lines}}$ }

\end{picture}
\caption{The union of a connected component of $\mathbb R B_{d-1}$, the line $\textcolor{red}{ \mathbb R L}$ and $d$ real lines. The degree of $B_{d-1}$ is odd in Fig. \ref{fig: opposite_perturbation}.1-2 and even in Fig. \ref{fig: opposite_perturbation}.3-4. Arrows denote a fixed complex orientation on respectively $\mathbb R B_{d-1}$ and $\textcolor{red}{ \mathbb R L}$.}
\label{fig: opposite_perturbation}
\end{figure} 

	\textbf{Induction step}: Assume that for degree $d-1$ and for any integer $l_{d-1}$ having the parity of $\lceil \frac{d-1}{2} \rceil$ and  with $\lceil \frac{d-1}{2} \rceil \leq l_{d-1} \leq \frac{(d-2)(d-3)}{2}+1$, there exists a non-singular real plane separating curve $B_{d-1}$ with $l_{d-1}$ real connected components such that a unique connected component of $\mathbb R B_{d-1}$ intersects $\mathbb R L$ transversally in $d-1$ points as in Fig. \ref{fig: simple}.
	
	For any fixed pair $(d-1, l_{d-1})$ as above, we are going to construct a non-singular real plane separating curve $B_{d}$ of degree $d$
 \begin{enumerate}
 	\item[(i)]  with $l_{d}=l_{d-1}+d-2$
 	\item[(ii)] with $l_{d}=l_{d-1}+ \delta$, where $\delta=d \mod 2$ and $\delta \in \{0,1\}$,
 \end{enumerate}
 real connected components such that a unique connected component of $\mathbb R B_{d}$ intersects $\mathbb R L$ in $d$ points as in Fig. \ref{fig: simple}.  
 Such construction will end the proof. In fact, remark that, for any given pair $(d,l_d)$ such that $l_d \equiv \lceil \frac{d}{2} \rceil \mod 2$ and
 $$ \lceil \frac{d}{2} \rceil \leq l_d \leq \frac{(d-1)(d-2)}{2}+1,$$ 
 there exists, by induction hypothesis, a non-singular real plane separating curve of degree $d-1$ either with $l_{d-1}=l_d -d+2 $ or $l_{d-1}=l_d- \delta $.

Let us fix a complex orientation on $\mathbb R L$ and $\mathbb R B_{d-1}$:
\begin{itemize}
	\item  as in Fig. \ref{fig: opposite_perturbation}.1, respectively Fig. \ref{fig: opposite_perturbation}.2, if $d-1$ is odd.
	\item as in Fig. \ref{fig: opposite_perturbation}.3, respectively Fig. \ref{fig: opposite_perturbation}.4, if $d-1$ is even.
\end{itemize}
Moreover, choose a connected component $\mathcal H$ among those of $\mathbb R L \setminus \mathbb R B_{d-1}$. If $d-1$ is even, choose $\mathcal H$ in the exterior of the oval intersecting $\mathbb R L$; see Fig. \ref{fig: opposite_perturbation}. 
 \begin{figure}[h!]
\begin{picture}(100,230)
\put(-120,15){\includegraphics[width=0.7\textwidth]{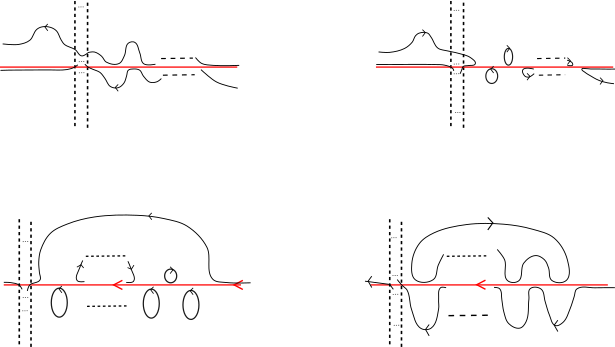}}
\put(-95,120){Fig. \ref{fig: perturbed}.1: $l_d=l_{d-1}$ }
\put(-97,210){$\overbrace{ \quad }^{d \text{ intersections}}$ }
\put(93,120){Fig. \ref{fig: perturbed}.2: $l_d=l_{d-1}+d-2$ }
\put(107,210){$\overbrace{ \quad }^{d \text{ intersections}}$ }
\put(-105,-2){Fig. \ref{fig: perturbed}.3: $l_d=l_{d-1}+d-2$ }
\put(-128,90){$\overbrace{ \quad }^{d \text{ intersections}}$ }
\put(105,-2){Fig. \ref{fig: perturbed}.4: $l_d=l_{d-1}+1$ }
\put(73,90){$\overbrace{ \quad }^{d \text{ intersections}}$ }
\end{picture}
\caption{A small type I perturbation of the union of a connected component of $\mathbb R B_{d-1}$, the line $\textcolor{red}{ \mathbb R L}$ and $d$ real lines. The degree of $B_{d}$ is even in Fig. \ref{fig: perturbed}.1-2 and odd in Fig. \ref{fig: perturbed}.3-4.}
\label{fig: perturbed}
\end{figure} 

Pick $d$ real lines $A_1,\dots, A_d$ intersecting transversally $\mathcal H$. Then, for $\varepsilon \in \mathbb R_{\not = 0}$ small enough, up to a choice of the sign of $\varepsilon$, one can perturb $B_{d-1}\cup L$ and obtain a real separating curve $B_d$ as zero set of the polynomial $b_{d-1}(x,y,z)l(x,y,z)+\varepsilon a_1(x,y,z)\dots a_d(x,y,z)=0$ and such that

\begin{itemize}
	\item a connected component $\mathbb R B_{d}$ intersects $\mathbb R L$ transversally in $d$ points as in Fig. \ref{fig: perturbed};
	\item  with $l_{d}=l_{d-1}+d-2$, respectively with $l_{d}=l_{d-1}+ \delta$, where $\delta=d \mod 2$ and $\delta \in \{0,1\}$, real connected components; see Fig. \ref{fig: perturbed}.
\end{itemize}

\end{proof}

We conclude looking at separating real curves in a different ambient surface.

Let $Q$ be $\mathbb{C} \mathbb P^1 \times \mathbb{C} \mathbb P^1$, equipped with the anti-holomorphic involution $\sigma: Q \rightarrow Q$ sending $(x,y)$ to $(\overline y, \overline x)$, where $x=[x_0:x_1]$ and $y=[y_0:y_1]$ are in $\mathbb{C} \mathbb P^1$ and $\overline{x}=[\overline{x_0}:\overline{x_1}]$ and $\overline{y}=[\overline{y_0}:\overline{y_1}]$ are respectively the images of $x$ and $y$ via the standard complex conjugation on $\mathbb{C} \mathbb P^1$. The real part of $Q$ is homeomorphic to a $2$-sphere and $Q$ is called \textit{quadric ellipsoid}. A non-singular real algebraic curve $A$ on $Q$ is defined by a bi-homogeneous polynomial of bidegree $(d, d)$ 
$$P(x,y)= \sum \limits_{0 \leq i,j \leq d} a_{i,j}x_0^ix_1^{d-i}y_0^jy_1^{d-j}=0,$$ 
where $d$ is a positive integer and the coefficients satisfy $a_{i,j}=\overline{a_{j,i}}$. If $A$ is separating then the number of the connected components of $\mathbb R A$ has the parity of $d$.

\begin{cor}

	\label{cor: quadric_ellipsoid}
	Let $A$ be a non-singular real algebraic curve of type I and with $l$ real connected components in the quadric ellipsoid $Q$. Assume one of the following:
\begin{enumerate}
	
	\item The bidegree of $A$ is $(2s,2s)$, for some $s \in \mathbb Z_{\geq2}$ and $$ 1 + s^2-4s+3 < \frac{l}{2}  < s^2-1.$$
	\item The bidegree of $A$ is $(2s+1, 2s+1)$, for some $s \in \mathbb Z_{\geq 2}$ and 
	
	$$ \max (0, s^2-4s+3 ) <  \frac{l-3}{2} < s^2-1.$$ 
\end{enumerate}
Then $A$ admits no separating morphisms of degree $l$.
\end{cor}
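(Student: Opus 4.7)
The plan is to derive the two cases of the corollary as direct applications of Theorems \ref{thm: main} and \ref{thm: main_D1} to $X=Q$. First I would verify that $Q$ satisfies conditions (1)--(3) of Setup \ref{set: DODI}: writing $F_1,F_2$ for the two ruling classes, one has $\chi(\mathcal O_Q)=1$ by K\"unneth, $K_Q=-2F_1-2F_2$, $(-K_Q)^2=8\geq 0$, and $-K_Q\cdot D=2(a+b)\geq 0$ for any effective $D$ of bidegree $(a,b)$.

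For case (1), where $A$ has bidegree $(2s,2s)$, the class $A+K_Q$ has even bidegree $(2s-2,2s-2)$. I would choose $D_1=\emptyset$ and $D_0$ a real divisor of bidegree $(s-1,s-1)$; a short comparison with decompositions where $D_1$ has even bidegree $\geq(2,2)$ shows this is the optimal choice of Setup \ref{set: DODI}, yielding $(D_0^2-D_0\cdot K_Q)/2=s^2-1$ and $(D_0^2+D_0\cdot K_Q)/2=s^2-4s+3$. Then $-K_Q\cdot D_0=4(s-1)>0$ for $s\geq 2$, so hypotheses (4)--(5) of Theorem \ref{thm: main} hold; since $l$ has the parity of $2s$, $\varepsilon=0$ and condition (6) reads exactly $1+s^2-4s+3<l/2<s^2-1$, which is the assumption of case (1).

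For case (2), where $A$ has bidegree $(2s+1,2s+1)$, the class $A+K_Q$ has odd bidegree $(2s-1,2s-1)$, so $D_1$ must be non-empty. I would choose $D_1$ a real reduced $(1,1)$-curve --- for instance the $\sigma$-invariant diagonal $\{x=y\}$ --- and $D_0$ of bidegree $(s-1,s-1)$; this again maximises $(D_0^2-D_0\cdot K_Q)/2=s^2-1$. One computes $D_1^2=2$, $-K_Q\cdot D_1=4$, and $(D_1^2-D_1\cdot K_Q)/2=3$. Since $l$ has the parity of $2s+1$ and the assumption $(l-3)/2>0$ forces $l\geq 5$, one gets $m=\min(2,l-1)=2$, so $l-m$ is odd, $\varepsilon=1$, and $\lfloor(D_1^2-m)/2\rfloor=0$. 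Hypothesis (7) of Theorem \ref{thm: main_D1} then becomes $\max(0,s^2-4s+3)<(l-3)/2<s^2-1$, which is precisely the assumption of case (2).

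The main obstacle is the verification that the decomposition $D=2D_0+D_1$ chosen above is genuinely optimal in the sense of Setup \ref{set: DODI}, since this is what gives the sharpest bounds on $l$. On the quadric ellipsoid this reduces to a direct numerical comparison among the finite list of admissible decompositions with $D_0$ of bidegree $(a,b)$, $a,b\leq s-1$, and $D_1$ of the complementary (even or odd) bidegree; one checks that shifting weight from $D_0$ to $D_1$ strictly decreases $(D_0^2-D_0\cdot K_Q)/2$. Granted this optimality, the corollary follows directly from Theorems \ref{thm: main} and \ref{thm: main_D1}.
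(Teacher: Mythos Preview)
Your proposal is correct and follows exactly the paper's approach: apply Theorem~\ref{thm: main} with $D_0$ of bidegree $(s-1,s-1)$ and $D_1=\emptyset$ in the even case, and Theorem~\ref{thm: main_D1} with $D_0$ of bidegree $(s-1,s-1)$, $D_1$ of bidegree $(1,1)$ and $m=2$ in the odd case. You have simply filled in the numerical verifications (the check of Setup~\ref{set: DODI}, the parity of $l$, the value of $m$, and the optimality of the decomposition) that the paper leaves implicit.
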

\begin{proof}
	Theorems \ref{thm: main_D1} and \ref{thm: main} apply to real separating bidegree $(d,d)$ curves on the quadric ellipsoid. If $d=2s+1$, the curve $D_1$ has bidegree $(1,1)$ and, therefore, the number $m$ equals $2$; otherwise, i.e. if $d=2s$, the curve $D_1$ is empty. The divisor $D_0$ realises the class of a curve of bidegree $(s-1,s-1)$.
\end{proof}
Real schemes realised by non-singular real algebraic curves of bidegree $(d,d)$ on $Q$ are completely classified for all $d \leq 5$; see \cite{GudShu80}, \cite{Mikh94} and \cite{Manz21}. Moreover such classifications distinguish the cases in which a given topological type may or may not be realised by a real algebraic curve of type I. 
 
For $d \leq 5$, we report in Table \ref{tabellalastchapter} a complete list of real schemes realised by type I real algebraic curves of bidegree $(d,d)$, with $l$ real connected components, where $l$ satisfies the hypotheses of Corollary \ref{cor: quadric_ellipsoid}. Hence, real separating curves of bidegree $(d,d)$ realising any among such real schemes do not admit separating morphisms of degree $l$. In order to understand Table \ref{tabellalastchapter}, let us introduce some notation. Let $A$ be a non-singular bidegree $(d,d)$ real curve on $Q$. The real connected components of $A$ are called \textit{ovals}. An oval in $\mathbb R Q$ bounds two disks; therefore, on $\mathbb R Q$ interior and exterior of an oval are not well defined. It follows that the encoding of real schemes on $\mathbb R Q$ is not well defined either and it depends on the choice of a point on $\mathbb R Q \setminus \mathbb R A$. Let $\bigsqcup_{i}B_i$ be a collection of ovals in $\mathbb R Q$. We say that the pair $(\mathbb R Q,\bigsqcup_{i}B_i )$ realises $\mathcal S$ if there exists a point $p \in \mathbb R Q \setminus \bigsqcup_{i}B_i $ such that $(\mathbb R Q \setminus \{p\},\bigsqcup_{i}B_i )$ realises $\mathcal S$. So that, in order to encode the topology of a real scheme in $\mathbb R Q$, we introduce some notation to encode that of real schemes in $\mathbb{R}^{2}$, which is homeomorphic to $\mathbb R Q$ deprived of a point. Let us call \textit{oval} any circle embedded in $\mathbb{R}^2$. Analogously to the case of $\mathbb{R} \mathbb P^{2}$, in $\mathbb{R}^{2}$ one can define interior and exterior of an oval and (non-)injective pairs for each pair of ovals; see the beginning of Section \ref{subsec: exa}. 
\begin{note}
	Let us consider collections of disjoint ovals in $\mathbb{R}^{2}$. An empty collection of ovals is denoted by $\langle 0 \rangle $. We say that a disjoint collection of $l$ ovals realises $\langle l \rangle$ if there are no injective pairs. The symbol $\langle 1\langle \mathcal{S} \rangle \rangle$ denotes the disjoint union of a non-empty collection of ovals realising $\langle \mathcal{S} \rangle$, and an oval forming an injective pair with each oval of the collection. The disjoint union of any two collections of ovals, realising respectively $\langle \mathcal{S}' \rangle$ and $\langle \mathcal{S}''\rangle$ in $\mathbb R^2$, is denoted by $\langle \mathcal{S}'  \sqcup  \mathcal{S}'' \rangle$ if none of the ovals of one collection forms an injective pair with the ovals of the other one and they are both non-empty collections. 
\end{note}
	\begin{table}[h!]
\centering
\begin{tabular}{ |l| l | r | c |l |c |c |}
\hline
d&Real schemes realised by type I curves& $l(D_0,D_1)$&$ \frac{l-j}{2} $& $r(D_0,D_1)$&$D_0$&$D_1$ \\

\hline
$4$&$\langle 1\langle 1\langle 1\langle 1 \rangle \rangle \rangle \rangle$&0&$2$&$3$&$(1,1)$&$-$\\
\hline
$5$&$\langle 1\langle  1\langle 1\langle 1\langle 1 \rangle \rangle \rangle \rangle \rangle$&$0$&$1$&$3$&$(1,1)$&$(1,1)$\\
\hline
$5$&$\langle \alpha \quad \sqcup \quad 1\langle \beta \rangle  \quad \sqcup \quad 1\langle \gamma \rangle \rangle,$ for all $\alpha, \beta, \gamma$ such&$0$&$2$&$3$&$(1,1)$&$(1,1)$ \\ 
& that $\alpha =0 \pmod 2$ and $\alpha + \beta + \gamma=5$ & &&&& \\
\hline
\end{tabular}
\caption{\label{tabellalastchapter} For each $d \leq 5$, this is a complete list of real schemes realised by type I real algebraic curves of bidegree $(d,d)$, with $l$ ovals, where $l$ satisfies the hypotheses of Corollary \ref{cor: quadric_ellipsoid}. The symbols $l(D_0,D_1),$ $r(D_0,D_1)$ represent the bounds respectively on the left and on the right for the number $\frac{l-j}{2}$ in $(1)-(2)$ of Corollary \ref{cor: quadric_ellipsoid}, where the index $j$ equals $0$ if $d$ is even and $3$ otherwise.}
\end{table}

Moreover, one can construct, in a way similar to the proof of Proposition \ref{prop: separanti_piane_harnack}, for every $d \geq 4$, bidegree $(d,d)$ separating real algebraic curves on $Q$ not admitting separating morphism of degree equal to the number of their real connected components. In fact, variations of Harnack's construction method and Rokhlin's complex orientations formula \cite{Zvon91}, \cite[Section 1.2]{Orev207} are also available on $Q$. 
\begin{rem}
In general, other real ambient surfaces as the quadric hyperboloid, del Pezzo surfaces, Hirzebruch surfaces have been previously studied and many real topological tools are developed to investigate applications of Theorem \ref{thm: stepan}; in order to have an idea of these settings see e.g. \cite{DegKha00}, \cite{GudShu80}, \cite{Mikh98}, \cite{Manz22}, \cite{Orev03}.
	\end{rem}
\bibliographystyle{alpha}
\bibliography{biblio.bib}

\par\nopagebreak
Matilde Manzaroli, \textsc{Universit\"{a}t T\"{u}bingen, Germany}\par\nopagebreak
 \textit{E-mail address}: \texttt{ matilde.manzaroli'at'uni-tuebingen.de}
\end{document}